\documentclass[11pt,  reqno]{amsart}

\usepackage{amsmath,amssymb,amscd,amsthm,amsxtra, esint,bbm}

\usepackage{mathrsfs} 
\usepackage{enumerate}

\usepackage[implicit=true]{hyperref}

\usepackage{color}
\usepackage{ulem}
\usepackage[makeroom]{cancel}

\usepackage{url}

\setlength{\pdfpagewidth}{8.50in}
\setlength{\pdfpageheight}{11.00in}

\usepackage[left=32mm, right=32mm, 
bottom=27mm]{geometry}

\allowdisplaybreaks[2]

\sloppy

\hfuzz  = 0.5cm 


\setlength{\pdfpagewidth}{8.50in}
\setlength{\pdfpageheight}{11.00in}

\definecolor{gr}{rgb}   {0.,   0.69,   0.23 }
\definecolor{bl}{rgb}   {0.,   0.5,   1. }
\definecolor{mg}{rgb}   {0.85,  0.,    0.85}
\definecolor{yl}{rgb}   {0.8,  0.7,   0.}
\definecolor{or}{rgb}  {0.7,0.2,0.2}

\newtheorem{theorem}{Theorem} [section]

\newtheorem{lemma}[theorem]{Lemma}
\newtheorem{proposition}[theorem]{Proposition}
\newtheorem{remark}[theorem]{Remark}

\DeclareMathOperator*{\intt}{\int}





\newcommand{\noi}{\noindent}

\newcommand{\R}{\mathbb{R}}

\newcommand{\T}{\mathbb{T}}

\let\Re=\undefined\DeclareMathOperator*{\Re}{Re}
\let\Im=\undefined\DeclareMathOperator*{\Im}{Im}

\let\P= \undefined
\newcommand{\P}{\mathbf{P}}

\newcommand{\E}{\mathbb{E}}

\newcommand{\F}{\mathcal{F}}

\newcommand{\be}{\beta}
\newcommand{\dl}{\delta}

\newcommand{\nb}{\nabla}

\newcommand{\Dl}{\Delta}
\newcommand{\eps}{\varepsilon}

\newcommand{\g}{\gamma}

\newcommand{\ld}{\lambda}

\newcommand{\Si}{\Sigma}
\newcommand{\ft}{\widehat}

\newcommand{\cj}{\overline}

\newcommand{\dt}{\partial_t}

\newcommand{\HS}{{\textup{HS}}}
\newcommand{\HSi}{{\textup{HS}(L^2;L^2)}}
\newcommand{\HSii}{{\textup{HS}(L^2;\dot{H}^1)}}
\newcommand{\hi}{{\textup{hi}}}
\newcommand{\lo}{{\textup{low}}}

\newcommand{\supT}{\sup_{0\le t\le \tau}}

\newcommand{\uu}{u^\ld}
\newcommand{\ta}{\theta}

\renewcommand{\o}{\omega}
\renewcommand{\O}{\Omega}

\newcommand{\les}{\lesssim}
\newcommand{\ges}{\gtrsim}

\newcommand{\jb}[1]
{\langle #1 \rangle}

\renewcommand{\b}{\be}
\newcommand{\ind}{\mathbf 1}

\newcommand{\N}{\mathbb{N}}

\newcommand{\NN}{\mathcal{N}}


\newtheorem*{ackno}{Acknowledgements}

\numberwithin{equation}{section}
\numberwithin{theorem}{section}





\begin{document}
\baselineskip = 14pt

\title[Almost conservation laws for stochastic NLS]
{Almost conservation laws for stochastic nonlinear Schr\"odinger equations}
\author[ K.~Cheung,  G.~Li, and T.~Oh]
{Kelvin Cheung, Guopeng Li, and Tadahiro Oh}

\address{
Kelvin Cheung\\
Department of Mathematics\\ Heriot-Watt University\\ and 
the Maxwell Institute for the Mathematical Sciences\\
Edinburgh\\ EH14 4AS\\ United Kingdom}

\email{K.K.Cheung-3@sms.ed.ac.uk}

\address{
Guopeng Li\\
School of Mathematics\\
The University of Edinburgh\\
and The Maxwell Institute for the Mathematical Sciences\\
James Clerk Maxwell Building\\
The King's Buildings\\
 Peter Guthrie Tait Road\\
Edinburgh\\ 
EH9 3FD\\United Kingdom} 

\email{G.Li-18@ed.ac.uk}

\address{
Tadahiro Oh\\
School of Mathematics\\
The University of Edinburgh\\
and The Maxwell Institute for the Mathematical Sciences\\
James Clerk Maxwell Building\\
The King's Buildings\\
 Peter Guthrie Tait Road\\
Edinburgh\\ 
EH9 3FD\\United Kingdom} 

\email{hiro.oh@ed.ac.uk}

\begin{abstract}
In this paper, we present a globalization argument 
for stochastic nonlinear dispersive PDEs with additive noises by adapting the $I$-method 
(= the method of almost conservation laws) to the stochastic setting.
As a model example, we  consider the defocusing stochastic cubic nonlinear Schr\"odinger equation (SNLS)
on $\R^3$
with additive stochastic forcing, white in time and correlated in space,
such that the noise lies below the energy space.
By  combining the $I$-method  with Ito's lemma
and a stopping time argument, 
we construct global-in-time dynamics for SNLS below the energy space.
\end{abstract}

\subjclass[2010]{35Q55,  	60H15}

\keywords{stochastic nonlinear Schr\"odinger equation; global well-posedness;
$I$-method; almost conservation law}


\maketitle


\section{Introduction}
\subsection{Stochastic nonlinear Schr\"odinger equation}

We consider the Cauchy problem for the  stochastic nonlinear Schr\"{o}dinger equation (SNLS)
with an additive noise:
\begin{equation}
\label{SNLS}
\begin{cases}
 i \dt u + \Dl  u = |u|^{p-1} u + \phi \xi \\
 u|_{t = 0} = u_0,
\end{cases}
\qquad (t, x) \in \R\times \R^d,
\end{equation}

\noi
where $\xi (t,x)$ denotes a (Gaussian) space-time white noise on $\R \times \R^d$ 
and $\phi$ is a bounded operator on $L^2(\R^d)$. 
In this paper, we restrict our attention to the defocusing case.
Our main goal is to establish global well-posedness of \eqref{SNLS} 
in the energy-subcritical case
with a rough noise,
namely,  
with 
a noise  not belonging to the energy space $H^1(\R^d)$.
Here, the energy-subcriticality refers to
the following range of $p$:   
(i)~$1 < p < 1 + \frac{4}{d-2}$ for  $d \geq 3$
and (ii)~$1 < p < \infty$ for $d = 1, 2$.
In terms of the scaling-critical regularity   $s_\text{crit}$ defined by 
 \begin{align*}
 s_\text{crit} = \frac d2 - \frac{ 2}{p-1}, 
 \end{align*}

\noi
the energy-subcriticality is equivalent to the condition $ s_\text{crit} < 1$.

We say that $u$ is a solution to \eqref{SNLS} on a given time interval $[-T, T]$
if it satisfies the following 
Duhamel formulation (= mild formulation):
\begin{align}
u(t) = S(t) u_0 -i \int_0^t S(t-t') |u|^{p-1}u(t') dt' -i \int_0^t S(t-t') \phi \xi (dt')
\label{SNLS2}
\end{align}

\noi
in $C([-T, T]; B(\R^d))$, 
where  $S(t) = e^{it \Dl}$ denotes the linear Schr\"{o}dinger propagator
and $B(\R^d)$ is a suitable Banach space of functions on $\R^d$.
In this paper, we take $B(\R^d)$ to be the $L^2$-based Sobolev space $H^s(\R^d)$
for some suitable $s \in \R$.
We say that $u$ is a global solution to~\eqref{SNLS}
if \eqref{SNLS2} holds  
in $C([-T, T]; B(\R^d))$
for any $T> 0$. 
We often construct a solution~$u$ 
belonging to 
 $C([-T, T]; B(\R^d))\cap X([-T, T])$, where  
$X([-T, T])$ denotes some auxiliary function space
such as the Strichartz spaces $L^q([-T, T]; W^{s, r}(\R^d))$; see \cite{DD2, OPW}.
For our purpose, we take 
this auxiliary function space $X([-T, T])$ to be 
(local-in-time version of) the Fourier restriction norm space
(namely, the $X^{s, b}$-space defined in~\eqref{Xsb} below).

The last term on the right-hand side of \eqref{SNLS2} represents the effect of the stochastic forcing 
and is called the stochastic convolution, which we denote by $\Psi$:
\begin{align}
\Psi(t) =  - i \int_0^t S(t - t') \phi \xi (dt').
\label{SNLS3}
\end{align}

\noi
See Subsection \ref{SUBSEC:Stco} for the precise meaning of 
the definition \eqref{SNLS3}; see \eqref{Psi1} and \eqref{Psi2}.
In the following, we assume that $\phi \in \HS(L^2; H^s)$
for appropriate values of $s \geq 0$, 
namely, $\phi$ is taken to be a Hilbert-Schmidt
operator from $L^2(\R^d)$ to $H^s(\R^d)$.
It is easy to see  that  $\phi \in \HS(L^2; H^s)$
implies
$\Psi \in C(\R; H^s(\R^d))$ almost surely; see \cite{DZ}.
Our main interest is to study \eqref{SNLS}
when $\phi \in \HS(L^2; H^s)$ for $s < 1$
such that the stochastic convolution does not belong to the energy space $H^1(\R^d)$.

When $\phi = 0$, 
the equation \eqref{SNLS} reduces to 
the  (deterministic)  defocusing nonlinear Schr\"odinger equation~(NLS):
\begin{equation}
i \partial_t u +   \Delta u =  |u|^{p-1} u.  
\label{NLS1}
\end{equation}

\noi
A standard contraction argument with the Strichartz estimates
 (see \eqref{Str1} below)
yields
local well-posedness of 
\eqref{NLS1}  in $H^s(\R^d)$
when $s \geq \max(s_\text{crit}, 0)$; 
see 
\cite{GV79,  Kato, Tsu, CW}.\footnote
{When $p$ is not an odd integer, 
we may need to impose an extra assumption 
due to the non-smoothness of the nonlinearity.
A similar comment applies to the case of SNLS.
}
On the other hand, 
 \eqref{NLS1} is known to be ill-posed
in the scaling supercritical regime:  $s < s_\text{crit}$.
See \cite{CCT, Kishimoto,  O17}.
In the energy-subcritical case, 
 global well-posedness of \eqref{NLS1} in $H^1(\R^d)$
easily follows
from iterating the local-in-time argument
in view of 
the 
following conservation laws
for~\eqref{NLS1}:
\begin{align}
\begin{split}
\text{Mass: }&  M(u(t)) = \int_{\R^d} |u(t, x)|^2 dx,\\
\text{Energy: } &  E(u(t)) = \frac 12 \int_{\R^d} |\nb u(t, x)|^2 dx
+ \frac{1}{p+1} \int_{\R^d} |u(t, x)|^{p+1} dx,
\end{split}
\label{cons}
\end{align}

\noi
providing a  global-in-time a priori control on the $H^1$-norm of a solution
to \eqref{NLS1}.

There are analogues of these well-posedness results
in the context of  SNLS \eqref{SNLS}. 
In~\cite{DD2}, de Bouard and Debussche 
studied \eqref{SNLS}
in the energy-subcritical setting, 
assuming that $\phi \in \HS(L^2;H^1)$.
By using  the Strichartz estimates, 
they showed that  the stochastic convolution $\Psi$
almost surely belongs to a right Strichartz space, 
which  allowed them to prove
local well-posedness of \eqref{SNLS} in $H^1(\R^d)$. 
When  $s \geq \max (s_\text{crit}, 0)$,
a slight modification of  the argument in \cite{DD2}
and the improved space-time regularity  of the stochastic convolution 
(see Lemma~\ref{LEM:stoconv} below)
yields
local well-posedness
 of \eqref{SNLS}
in $H^s(\R^d)$, 
provided that $\phi\in \HS(L^2; H^s)$.
In the energy-subcritical case, 
one can adapt the globalization argument
for the deterministic NLS \eqref{NLS1}, 
based on the conservation laws~\eqref{cons}, 
to the stochastic setting
with a sufficiently regular noise.
More precisely, 
assuming $\phi \in \HS(L^2; H^1)$, 
de Bouard and Debussche~\cite{DD2}
proved global well-posedness of~\eqref{SNLS} in $H^1(\R^d)$
by 
applying Ito's lemma
to the mass $M(u)$ and the energy $E(u)$ in \eqref{cons}
and 
establishing
an  a priori $H^1$-bound of solutions to \eqref{SNLS}. 
In this paper, 
we  also consider 
the energy-subcritical case
but we treat a rougher noise: 
 $\phi\in \HS(L^2; H^s)$ for $s < 1$.

In the deterministic setting, 
Colliander, Keel, Staffilani, Takaoka, and Tao \cite{CKSTT0}
introduced
the so-called $I$-method (also known as the method of almost conservation laws)
and proved global well-posedness
of the energy-subcritical defocusing cubic NLS (\eqref{NLS1} with $p = 3$) on $\R^d$, $d = 2, 3$,
below the energy space.
Since then, 
the $I$-method has been applied to a wide class of  dispersive models
in establishing  global well-posedness below the energy spaces
(or more generally below regularities associated 
with conservation laws), 
where there is no a priori bound
on relevant norms 
(for iterating a local-in-time argument)
directly given by a conservation law.
Our strategy 
for proving global well-posedness of SNLS \eqref{SNLS}
when  $\phi\in \HS(L^2; H^s)$, $s < 1$, 
is to implement the $I$-method
in the stochastic PDE setting.
This will provide a general framework 
for establishing global well-posedness
of stochastic dispersive equations
with additive noises 
below energy spaces.

\subsection{Main result}

For the sake of concreteness, 
we consider SNLS \eqref{SNLS} in the three-dimensional cubic case
($d = 3$ and $p = 3$):
\begin{equation}
 \begin{cases}
i \dt  u + \Dl u =|u|^2 u +\phi \xi    \\ 
 u|_{t=0}=u_0\in H^s(\R^3),
 \end{cases}
 \qquad (t, x) \in \R\times \R^3.
\label{SNLS0}
 \end{equation}

\noi
We point out,  however,  that our implementation
of the $I$-method in the stochastic PDE setting
is sufficiently general 
and can be easily adapted  to other dispersive models with rough additive stochastic forcing. We now state our main result.

\begin{theorem}\label{THM:main}
Let $ d=3 $.  Suppose that $\phi \in \HS(L^2; H^s)$ for some $s>\frac56$.  
Then, the defocusing stochastic cubic NLS \eqref{SNLS0} on $\R^3$
is globally well-posed in $H^s(\R^3)$.
\end{theorem}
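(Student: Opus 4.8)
The plan is to transplant the deterministic $I$-method of \cite{CKSTT0} to the stochastic setting, using It\^o's lemma in place of the exact energy identity and a stopping-time argument to absorb the resulting martingale fluctuations. First I would set up the local theory together with the smoothing operator: let $I = I_N$ be the Fourier multiplier equal to the identity on frequencies $|\xi|\lesssim N$ and damping high frequencies so that $I\colon H^s\to H^1$, and establish local well-posedness of \eqref{SNLS0} in $H^s$ for $s>\tfrac56$ by the decomposition $u = \Psi + v$ and a contraction for $v$ in a local-in-time $X^{s,b}$ space, using the space-time regularity of the stochastic convolution $\Psi$ from Lemma~\ref{LEM:stoconv}. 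What I ultimately need from this step is a blow-up alternative: the $H^s$-solution persists as long as the modified energy $E(Iu)$ and the mass $M(u)$ remain finite, with a local existence time quantified in terms of these quantities and the size of $\Psi$.

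Next I would introduce the modified energy $E(Iu)$ and mass $M(u)$ and record that, since the equation is defocusing, a global-in-time a priori bound on $E(Iu)$ and $M(u)$ controls $\|Iu\|_{H^1}$ and hence $\|u\|_{H^s}$. To make the initial modified energy small I would rescale by $u^\lambda(t,x) = \lambda^{-1}u(\lambda^{-2}t,\lambda^{-1}x)$, which solves the same cubic equation with the noise operator $\phi$ replaced by a rescaled operator $\phi^\lambda$; recovering $u$ on $[0,T]$ amounts to controlling $u^\lambda$ on $[0,\lambda^2 T]$. Tracking the dependence on $(\lambda,N)$ of $E(Iu^\lambda(0))$, of $M(u^\lambda(0))$, and of the Hilbert--Schmidt norms of $\phi^\lambda$, I would choose $\lambda = \lambda(N,\|u_0\|_{H^s})$ so that $E(Iu^\lambda(0))\le\tfrac12$, with the leading balance $N^{2(1-s)}\lambda^{1-2s}\sim 1$.

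The core is the stochastic almost conservation law. Since the additive noise makes $u^\lambda$ an It\^o process, I would apply It\^o's lemma to $t\mapsto E(Iu^\lambda(t))$ and to $M(u^\lambda(t))$. For the energy this produces three contributions: (i)~a drift measuring the failure of $I$ to commute with the nonlinearity, i.e.\ the usual $I$-method commutator error, which the multilinear $X^{s,b}$ estimates bound by $N^{-\beta}$ times powers of the local solution norm on each unit subinterval; (ii)~an It\^o correction drift, controlled by $\|I\phi^\lambda\|_{\HS(L^2;\dot H^1)}^2\lesssim N^{2(1-s)}\|\phi^\lambda\|_{\HS(L^2;H^s)}^2$ together with lower-order mass-type terms; and (iii)~a mean-zero martingale. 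I would then fix a stopping time $\tau$ as the first time $E(Iu^\lambda)$ exits a fixed ball or the local $X^{s,b}$ norm exceeds a threshold, iterate the increment estimate over the $\sim\lambda^2 T$ unit subintervals of $[0,\tau\wedge\lambda^2 T]$, and use the Burkholder--Davis--Gundy inequality to estimate the accumulated martingale. Requiring the per-step commutator gain $N^{-\beta}$ and the It\^o growth to beat the number $\sim\lambda^2 T$ of steps, while keeping the mass bounded, forces the condition $s>\tfrac56$ and shows that, with the chosen $\lambda$ and $N$ large, the modified energy stays $\le 1$ up to time $\lambda^2 T$ with high probability; undoing the scaling yields an a priori $H^s$ bound on $[0,T]$, which together with the blow-up alternative gives global existence almost surely.

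I expect the main obstacle to lie in step~(iii): coupling the probabilistic control of the martingale with the deterministic iteration of the commutator estimate. In the deterministic $I$-method the energy increment on each subinterval is a clean pointwise bound that feeds directly into the estimate on the next interval, whereas here each increment carries a random martingale piece, so the local-in-time norm entering the commutator bound --- and hence the next increment --- is itself random and must be controlled on the stopping-time interval simultaneously with the energy. Making the stopping-time and iteration scheme consistent with the random local existence time, while keeping all $(\lambda,N)$ dependencies balanced so that the threshold $s>\tfrac56$ is exactly met, is the delicate point.
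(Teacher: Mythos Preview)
Your proposal is essentially the paper's argument: scale, apply $I_N$, run It\^o's lemma on $E(I_Nu^\lambda)$ once on $[0,\tau\wedge\lambda^2T]$ with a single application of Burkholder--Davis--Gundy, and sum only the deterministic commutator drift over the $\sim\lambda^2T$ unit subintervals; the stopping time is exactly the exit time of $E(I_Nu^\lambda)$ from a small ball, and the threshold $s>\tfrac56$ arises from balancing $\lambda^2T$ against the $N^{-1+}$ commutator gain. Two small points where the paper differs from your sketch: first, the local theory is \emph{not} done via the Da~Prato--Debussche splitting $u=\Psi+v$ but by a direct contraction for $I_Nu$ in $\nabla^{-1}X^{0,\frac12-\varepsilon}$, because the stochastic convolution only lies in $X^{s,b}$ for $b<\tfrac12$; this forces the trilinear and commutator estimates of \cite{CKSTT0} to be reproved at $b=\tfrac12-\varepsilon$ (done by interpolation, at the cost of an arbitrarily small loss in $N$), a technical step you should not skip. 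Second, the paper separates the randomness cleanly: the stopping time involves only the modified energy, while the smallness of $\|\nabla I_N\Psi^\lambda\|_{X^{0,\frac12-}([j,j+1])}$ on every unit interval is enforced by restricting to a good event $\Omega^{(1)}_{T,\varepsilon}$ of probability $>1-\tfrac\eps2$, rather than folding the $X^{s,b}$ norm into the stopping time as you suggest.
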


Our main goal is to present an argument 
 which combines the $I$-method  in~\cite{CKSTT0}
with the Ito calculus approach in \cite{DD2}.
Note that  the regularity range  $s > \frac 56$ in Theorem \ref{THM:main}
 agrees with  the regularity range in the deterministic case~\cite{CKSTT0}.
We expect that this regularity range may be improved by employing more sophisticated tools such as the resonant decomposition \cite{CKSTT08};
see Remark \ref{REM:res}.
In view of the global well-posedness result in $H^1(\R^3)$ by de Bouard and Debussche~\cite{DD2}, 
we only consider $\frac 56 < s < 1$ in the following.

Let us first  go over the main idea of the $I$-method argument in \cite{CKSTT0}
applied to  the deterministic cubic NLS on $\R^3$, i.e.~\eqref{SNLS0} with $\phi = 0$.
Fix  $u_0 \in H^s(\R^3)$
for some $\frac 56 < s \leq 1$.
Then, the standard Strichartz theory yields
local well-posedness 
of \eqref{NLS1} with $u|_{ t= 0} = u_0$
in the subcritical sense,
namely, 
 time of local existence depends only on the $H^s$-norm of the initial data $u_0$.
Hence, once we obtain 
 an a priori control of the $H^s$-norm of the solution,
 we can  iterate the local-in-time argument and prove global existence.
 When $s = 1$, the conservation of the mass and energy in \eqref{cons}
 provides a global-in-time a priori control of the $H^1$-norm of the solution.
  When $\frac 56 < s < 1$,  the conservation of the energy $E(u)$ is no longer available
  (since $E(u) = \infty$ in general),
  while the mass $M(u)$ is still finite and conserved.
Therefore, the main goal is to control the growth of the homogeneous Sobolev $\dot H^s$-norm
of the solution.

Unlike the $s = 1$ case, 
we do not aim to obtain a global-in-time boundedness of the $\dot H^s$-norm of the solution. 
Instead, the goal is to show that, given any large target time $T\gg1$, 
the $\dot H^s$-norm of the solution remains finite 
on the time interval $[0, T]$, with a bound depending on $T$.
The main idea of the $I$-method is
to introduce a smoothing operator $I = I_N$, 
known as the $I$-operator, 
mapping $H^s(\R^3)$ into $H^1(\R^3)$.
Here, the $I$-operator depends on a parameter $N = N(T) \gg 1$
(to be chosen later)
such that $I_N $ acts essentially as the 
{\it i}\hspace{0.6pt}dentity operator on low frequencies $\{|\xi|\les N\}$
and as a fractional {\it i}\hspace{0.6pt}ntegration operator of order $1 - s$
on high frequencies $\{|\xi| \gg N\}$; see Section \ref{SEC:I} for the precise definition.
Thanks to the smoothing of the $I$-operator, 
the modified energy:
\begin{equation*}
E(I_N u)
=\frac{1}{2}\int_{\R^3}|\nb I_N u|^2 dx+\frac{1}{4}\int_{\R^3}|I_N u|^4dx
\end{equation*}

\noi
 is finite for $u \in H^s(\R^3)$.
Moreover, the modified energy $E(I_N u)$ controls  $\|u\|_{\dot H^s}^2$.
See \eqref{I2} below.
Hence, the main task is reduced to controlling the growth of the modified energy
$E(I_N u)$.

While the energy $E(u)$ is conserved for (smooth) solutions to NLS \eqref{NLS1}, 
the modified energy $E(I_N u)$ is no longer conserved since
$I_N u$ does not satisfy the original equation.
Instead, 
$I_N u$ satisfies
the following $I$-NLS:
\begin{align}\label{INLS}
\begin{split}
i\dt I_N u + \Dl I_N u 
& = I_N (|u|^2u) \\
& = |I_N u|^2 I_N u + \big\{I_N (|u|^2u) - |I_N u|^2I_N u\big\}\\
& =: \NN(I_N u)  + [I_N,  \NN](u),
\end{split}
\end{align}

\noi
where $\NN(u) = |u|^2 u$ denotes the cubic nonlinearity.
The commutator term
\begin{align}
[I_N,  \NN](u) = 
I_N (|u|^2u) - |I_N u|^2I_N u
\label{com1}
\end{align}

\noi
is the source of  non-conservation of the modified energy $E(I_N u)$.
A direct computation shows 
\[ \dt E(I_N u) =  - \Re \int_{\R^3} \cj {\dt I_N u }\, [I_N,  \NN](u) dx.\]

\noi
See \eqref{Igrowth}.
Thanks to the commutator structure, 
it is possible to obtain a good estimate (with a decay in the large parameter $N$)
for $\dt E(I_N u)$ on each local time interval
(See Proposition~4.1 in~\cite{CKSTT0}).
Then, by using a scaling argument (with a parameter $\ld = \ld(T) \gg1 $, depending on the target time $T$), 
we (i) first reduce the situation to the small data setting, 
 (ii) then iterate the local-in-time argument
with a good bound on $\dt E(I_N u^\ld)$ on the scaled solution $u^\ld$, 
and (iii) choose $N = N(T) \gg 1$ sufficiently large such that 
 the scaled target time $\ld^2 T$ 
is (at most) the doubling time
for the modified energy $E(I_N u^\ld)$.
This yields the regularity restriction $s > \frac 56$ in \cite{CKSTT0}.

Let us turn to the case of the stochastic NLS \eqref{SNLS0}.
In proceeding with the $I$-method, 
we need to estimate the growth of the modified energy $E(I_Nu)$.
In this stochastic setting, we have two sources for non-conservation of $E(I_N u)$.
The first one is 
  the commutator term
$[I_N, \NN](u)$ in \eqref{com1} as in the deterministic case described above.
This term can be handled almost in the same manner as in \cite{CKSTT0}
but some care must be taken due to a weaker regularity in time ($b < \frac 12$).
See Proposition~\ref{PROP:CommBd} below.
The second source for non-conservation of $E(I_N u)$ is  the stochastic forcing.
In particular, in estimating the growth 
of the modified energy $E(I_N u)$, we need to apply Ito's lemma 
to $E(I_N u)$, 
which introduces several correction terms.

In the deterministic case \cite{CKSTT0}, 
one iteratively applies the local-in-time argument and estimate energy increment
on each local time interval.
A naive adaptation of this argument to the stochastic setting
would lead to  iterative applications of Ito's lemma
to estimate the growth of the modified energy $E(I_N u)$.
In controlling an  expression of the form
\[\E \bigg[ \sup_{ 0 \leq t \leq t_0} E(I_N u)\bigg], \]

\noi
we need to apply 
Burkholder-Davis-Gundy inequality, 
which introduces a multiplicative constant $C>1$.
See Lemma \ref{LEM:Ito} below.
Namely, if we were to apply Ito's lemma iteratively 
on each time interval of local existence, 
then this would lead to an exponential growth 
of the constant in front of the modified energy.
This causes an iteration argument to break down.

We instead apply Ito's lemma only once
on the global time interval $[0, \ld^2T]$.
At the same time, we estimate the contribution
from the commutator term iteratively on each local time interval.
Note that this latter task requires 
a small data assumption, 
which we handle by introducing a suitable stopping time 
and iteratively verifying such a small data assumption.
See Section \ref{SEC:end}.

As in the deterministic setting, 
we employ a scaling argument to reduce the problem to the small data regime.
In the stochastic setting, we need to proceed with care in applying a scaling to the noise $\phi \xi$
since we need to apply Ito's lemma after scaling.
Namely, we need to  express the scaled noise 
as $\phi^\ld  \xi^\ld$, 
where $ \xi^\ld $ is another space-time white noise
(defined by the white noise scaling; see \eqref{noiseS} below)
such that Ito calculus can be applied.
This forces us to study the scaled Hilbert-Schmidt operator $\phi^\ld$.
In the application of Ito's lemma, 
there are correction terms due to 
$I_N \phi^\ld$
besides the commutator term
$[I_N, \NN](u^\ld)$.
In order to carry out an iterative procedure, 
we need to make sure that the contribution from 
the correction terms involving $I_N \phi^\ld$
is negligible as compared to that from the commutator term.
See Subsection \ref{SUBSEC:scal}
and Section \ref{SEC:end}.
As a result, the regularity restriction $s > \frac 56$
comes from the commutator term 
as in the deterministic case.

\smallskip

We conclude this introduction by several remarks.

\begin{remark}\rm

In this paper, we implement the $I$-method
in the stochastic PDE setting.
There is a recent work \cite{GKOT}
by Gubinelli, Koch, Tolomeo, and the third author, 
establishing  global well-posedness 
of the (renormalized) defocusing stochastic cubic nonlinear 
wave equation on the two-dimensional torus $\T^2$, 
forced by space-time white noise.
The $I$-method was also employed in~\cite{GKOT}.
We point out that our argument in this paper is a 
{\it genuine
extension of the $I$-method to the stochastic setting}, which can be applied
to a wide class of stochastic dispersive equations.
On the other hand, in \cite{GKOT}, 
the $I$-method was applied 
to the residual term $v = u - \Psi_{\text{wave}}$
in the Da Prato-Debussche trick \cite{DPD2}, 
where $\Psi_{\text{wave}}$ denotes 
the stochastic convolution in the wave setting.
Furthermore, the $I$-method argument in \cite{GKOT}
is pathwise, namely, {\it entirely deterministic} once
we take the pathwise regularity of $\Psi_{\text{wave}}$
(and its Wick powers)
from \cite{GKO}.

\end{remark}

\begin{remark}\rm
As in the usual application of the $I$-method in the deterministic setting, 
our implementation of the $I$-method in the stochastic setting yields
a polynomial-in-time  growth bound  of the $H^s$-norm of a solution.
See Remark \ref{REM:growth}.
We point out that the $I$-method approach to the singular
stochastic nonlinear wave equation on $\T^2$ with the defocusing cubic nonlinearity in \cite{GKOT} yields
a much worse double exponential growth bound.
\end{remark}

\begin{remark}\rm
In  a recent paper \cite{OO},  the third author
and Okamoto
studied  SNLS \eqref{SNLS} with an additive noise
in the mass-critical case ($p = 1+ \frac 4d$)
and the energy-critical case ($p = 1+ \frac 4{d-2}$, $d \geq 3$).
By adapting the recent deterministic mass-critical
and energy-critical global theory, 
they proved global well-posedness of 
\eqref{SNLS} in the critical spaces.
In particular, when $d = 2$ and $p = 3$, 
this yields global well-posedness
 the two-dimensional defocusing stochastic cubic NLS  in  $L^2(\R^2)$.
This is the reason why 
we only considered the three-dimensional case
in Theorem \ref{THM:main},
since our $I$-method argument would  yield global well-posedness only for $s > \frac 47$ 
in the two-dimensional cubic case
(just as in the deterministic case \cite{CKSTT0}),
which is subsumed by the aforementioned
global well-posedness result in \cite{OO}.

\end{remark}

\begin{remark} \label{REM:res}
\rm
In an application of the $I$-method, 
it is possible to introduce a correction term (away from a nearly resonant part)
and improve the regularity range.  See  \cite{CKSTT08}.
It would be of interest to implement
such an argument 
to the stochastic PDE setting since
a computation of a correction term would involve Ito's lemma.

\end{remark}

\begin{remark}\rm
We mentioned that our implementation of the $I$-method
in the stochastic PDE setting is 
sufficiently general and is applicable to other  dispersive equations
forced by additive noise.
This is conditional to an assumption 
that a commutator term can be treated with  a weaker temporal regularity $b < \frac 12$.
In the case of SNLS, this can be achieved by a simple interpolation argument, 
at a slight loss of spatial regularity.
See Section~\ref{SEC:com}.
See also \cite{CM}
for an analogous argument in the periodic case.
In this regard, it is of interest to study the stochastic KdV equation
in negative Sobolev spaces since
 crucial estimates for KdV
require 
the temporal regularity 
to be  $b = \frac 12$.
  See \cite{BO93x, KPV, CKSTT3}.

\end{remark}

This paper is organized as follows.
In Section \ref{SEC:Prelim}, 
we go over the preliminary materials 
from deterministic and stochastic analysis.
We then reduce a proof of Theorem \ref{THM:main}
to controlling the homogeneous $\dot H^s$-norm
of a solution (Remark \ref{REM:L2bound}).
In Section \ref{SEC:I}, we introduce the $I$-operator
and go over local well-posedness of $I$-SNLS \eqref{SNLSI}.
Then, we discuss the scaling properties
of $I$-SNLS in Subsection \ref{SUBSEC:scal}.
In Section \ref{SEC:com}, 
we briefly go over the nonlinear estimates, 
indicating required modifications 
from \cite{CKSTT0}.
In Section \ref{SEC:energy}, 
we apply Ito calculus to bound the modified energy
in terms of a term involving the commutator $[I_N, \NN]$.
Lastly, 
we put all the ingredients together and present a proof of Theorem \ref{THM:main}
in Section \ref{SEC:end}.

\section{Preliminaries}\label{SEC:Prelim}

In this section, we first introduce notations
and function spaces along with the relevant linear estimates.
We also go over preliminary lemmas from stochastic analysis.
We then discuss a reduction of the proof of Theorem \ref{THM:main};
see Remark \ref{REM:L2bound}.

\subsection{Notations}

 For simplicity, we  drop $ 2\pi $ in dealing with the Fourier transforms.
We first
recall the Fourier restriction norm spaces
 $ X^{s,b}(\R\times \R^d) $ introduced by Bourgain~\cite{Bo93}. 
 The $X^{s, b}$-space is defined by the norm:
\begin{align}
\|u\|_{X^{s,b}}=\| \jb{\xi}^{s}\jb{\tau+|\xi|^2}^b \ft{u}(\tau,\xi)\|_{L^2_{\tau}L^2_{\xi}(\R\times\R^d)}, 
\label{Xsb}
\end{align}

\noi
where $ \jb{\,\cdot\,} = (1 +|\cdot|^2)^\frac{1}{2} $. 
When $b > \frac 12$, we have the following embedding:
\begin{align}
X^{s, b}(\R \times \R^d ) \subset C(\R; H^s(\R^d)).
\label{embed1}
\end{align}

\noi
Given $\dl > 0$, we define the local-in-time version $ X^{s,b}_{\dl} $ on $ [0,\dl] \times\R^d  $ by 
\begin{align}
\|u\|_{X^{s,b}_{\dl}}:=\inf \big\{ \|v\|_{X^{s,b}(\R \times \R^d)} : v|_{[0,\dl]}=u    \big\}.
\label{embed2}
\end{align}

\noi
Given a time  interval $J \subset \R$, 
we also define  the local-in-time version $  X^{s,b}(J)$
in an analogous manner.

When we work with space-time function spaces, we use short-hand notations such as
 $C_T H^s_x  = C([0, T]; H^s(\R^d))$.

%
%
%
%

We write $ A \les B $ to denote an estimate of the form $ A \leq CB $. 
Similarly, we write  $ A \sim B $ to denote $ A \les B $ and $ B \les A $ and use $ A \ll B $ 
when we have $A \leq c B$ for small $c > 0$.
We may use subscripts to denote dependence on external parameters; for example,
 $A\les_{p, q} B$ means $A\le C(p, q) B$,
 where the constant $C(p, q)$ depends on parameters $p$ and $q$. 
We also use  $ a+ $ (and $ a- $) to mean  $ a + \eps $ (and $ a-\eps $, respectively)
 for arbitrarily small $ \eps >0 $.
As it is common in probability theory, we use $A\wedge B$ to denote $\min (A, B)$.

Given dyadic $M \geq 1$, 
we use $\P_M$ to denote 
  the Littlewood-Paley projector onto the frequencies\footnote{When $M = 1$, 
  $\P_1$ is a smooth projector onto the frequencies $\{|\xi|\les 1\}$.} $\{|\xi|\sim M\}$
  such that 
\begin{align}
 f = \sum_{\substack{M\geq 1\\\text{dyadic}}}^\infty \P_M f. 
\label{Px1}
 \end{align}

In view of the time reversibility  of the problem, 
we only consider positive times in the following.

\subsection{Linear estimates}

We first  recall the Strichartz estimate. We say that
a pair of indices  $(q, r)$ is Strichartz admissible
if $2\leq q, r \leq \infty$, $(q, r, d) \ne (2, \infty, 2)$, and 
\begin{equation*}
\frac{2}{q} + \frac{d}{r} = \frac{d}{2}.
\label{Admin}
\end{equation*}
Then, given any  admissible pair $(q,r)$, the following Strichartz estimates
are known to hold: 
\begin{align}\label{Str1}
\|S(t)f\|_{L_t^qL_x^r(\R\times \R^d)}&\lesssim \|f\|_{H^s}.
\end{align}

\noi
See \cite{Strichartz, Yajima, GV, KeelTao}.

Next, we recall the standard linear estimates for  the $X^{s, b}$-spaces.
See, for example, \cite{GTV, Tao} for the proofs of (i) and (ii).

\begin{lemma}\label{LEM:lin}
%
%
%
\textup{(i) (homogeneous linear estimate).}
Given $s, b \in \R$, we have 
\begin{align*}  
\|S(t)f\|_{X^{s,b}_{T}}\les\|f\|_{H^s}
\end{align*}

\noi
for any $0 < T \leq 1$.
Moreover, we have $S(t) f \in C([0, T]; H^s(\R^d))$ for $f \in H^s(\R^d)$.

\smallskip

\noi
\textup{(ii) (nonhomogeneous linear estimate).} 
Given $s \in \R$, $b > \frac 12$
sufficiently close to $\frac 12$, 
 and small $\ta > 0$, 
we have
\begin{align*}
\bigg\|\int^{t}_{0}S(t-t')F(t')dt'\bigg\|_{X^{s, b }_T}\les T^{\ta}\|F\|_{X^{s, b - 1 + \ta}_T}
\end{align*}

\noi
for any $0 < T \leq 1$.

\smallskip

\noi
\textup{(iii)} \textup{(transference principle).} Let $(q, r)$ be Strichartz admissible.
Then, for any $b > \frac 12 $, we have
\[ \| u \|_{L^q_tL^r_x} \les \| u \|_{X^{0, b}}.\]

\smallskip

\noi
\textup{(iv)} Let $ d=3 $.
Then,  given any $2\leq p <\frac{10}{3}$,  
there exists small $\eps > 0$ such that 
\begin{align}\label{Str2}
\|u\|_{L_{t,x}^{p}(\R\times \R^3)} \lesssim \|u\|_{X^{0,\frac{1}{2}-\eps}}.
\end{align}
\end{lemma}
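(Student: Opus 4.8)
The plan is to derive \eqref{Str2} by complex interpolation between two endpoint estimates: a trivial $L^2$ bound at modulation exponent $b = 0$, and the diagonal $L^{10/3}$ Strichartz estimate at an exponent $b > \frac12$ supplied by the transference principle in part~(iii). The mild loss in the range of $p$ (namely $p < \frac{10}{3}$ rather than the endpoint $p = \frac{10}{3}$) will be exactly the price paid for lowering the temporal weight below $b = \frac12$.

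First I would record the two endpoints. Since the diagonal pair $(q, r) = \big(\frac{10}{3}, \frac{10}{3}\big)$ is Strichartz admissible in $d = 3$, as $\frac{2}{q} + \frac{3}{r} = \frac{5}{10/3} = \frac32$, part~(iii) gives, for any fixed $b_1 > \frac12$,
\begin{align*}
\|u\|_{L^{10/3}_{t,x}(\R \times \R^3)} \les \|u\|_{X^{0, b_1}}.
\end{align*}
On the other hand, Plancherel's theorem yields the identity $\|u\|_{L^2_{t,x}} = \|u\|_{X^{0,0}}$.

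Next I would interpolate. The key observation is that, on the Fourier side, $X^{0,b}$ is precisely the weighted space $L^2\big(\jb{\tau + |\xi|^2}^{2b}\, d\tau\, d\xi\big)$, so that $\{X^{0,b}\}_b$ forms a complex interpolation scale. By complex interpolation of weighted $L^2$ spaces (Stein--Weiss) one has
\begin{align*}
[X^{0, 0}, X^{0, b_1}]_\ta = X^{0, \ta b_1}, \qquad 0 \le \ta \le 1,
\end{align*}
while $[L^2_{t,x}, L^{10/3}_{t,x}]_\ta = L^{p_\ta}_{t,x}$ with $\frac{1}{p_\ta} = \frac{1-\ta}{2} + \frac{3\ta}{10} = \frac12 - \frac{\ta}{5}$ by the Riesz--Thorin theorem. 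Applying the (functorial) complex interpolation method to the identity map, which is bounded from each of the two endpoints above, yields
\begin{align*}
\|u\|_{L^{p_\ta}_{t,x}} \les \|u\|_{X^{0, \ta b_1}}.
\end{align*}

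Finally I would match parameters. Given any $2 \le p < \frac{10}{3}$, solving $\frac1p = \frac12 - \frac{\ta}{5}$ gives $\ta = \frac52 - \frac5p \in [0, 1)$, with $\ta \to 1$ precisely as $p \to \frac{10}{3}$. Since $\ta < 1$, I may choose $b_1 > \frac12$ close enough to $\frac12$ that $\ta b_1 < \frac12$ (any $b_1 < \frac{1}{2\ta}$ works), and then set $\eps := \frac12 - \ta b_1 > 0$, which produces \eqref{Str2}. The only step requiring genuine care is the justification that the $X^{0,b}$ family is a legitimate complex interpolation scale; this is immediate once $X^{0,b}$ is identified with the weighted $L^2$ space above, so no real obstacle arises, the whole argument being a clean interpolation between the free Strichartz endpoint and the $L^2$ isometry.
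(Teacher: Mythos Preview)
Your proof is correct and follows essentially the same approach as the paper: the authors also obtain \eqref{Str2} by interpolating the transference-principle bound $\|u\|_{L^{10/3}_{t,x}} \les \|u\|_{X^{0,b}}$ (for $b>\tfrac12$) against the trivial identity $\|u\|_{L^2_{t,x}} = \|u\|_{X^{0,0}}$. You have simply spelled out the interpolation step (via the weighted-$L^2$ description of $X^{0,b}$ and Riesz--Thorin) that the paper leaves implicit.
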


\begin{proof}
As for (iii), see, for example, Lemma 2.9 in \cite{Tao}.
In the following, we only discuss Part (iv).
Noting that $(\frac {10}3, \frac {10}3)$ is Strichartz admissible
when $ d= 3$, it follows from 
the transference principle 
and the Strichartz estimate \eqref{Str1} that 
\begin{align}
\|u\|_{L_{t, x}^{\frac{10}{3}}}&\lesssim \|u\|_{X^{0,b}}
\label{Str3}
\end{align}

\noi
for  $ b >  \frac{1}{2} $.
Interpolating this with 
 the trivial bound: $\|u\|_{L_{t,x}^{2}} =   \|u\|_{X^{0,0}}$, 
 we obtain the desired estimate \eqref{Str2}.
\end{proof}

\subsection{Tools from stochastic analysis}
\label{SUBSEC:Stco}

Lastly, we go over basic tools from stochastic analysis
and then provide some reduction for the proof of Theorem \ref{THM:main}.

We first recall the regularity properties of the stochastic convolution $\Psi$ defined in \eqref{SNLS3}.
Given two separable Hilbert spaces $H$ and $K$, we denote by $\HS (H;K)$ the space of Hilbert-Schmidt operators $\phi$ from $H$ to $K$, endowed with the norm:
\[
\| \phi \|_{\HS(H;K)} = \bigg( \sum_{n \in \N} \| \phi f_n \|_K^2 \bigg)^{\frac{1}{2}},
\]

\noi
where $\{ f_n \}_{n \in \N}$ is an orthonormal basis of $H$.
Recall that the Hilbert-Schmidt norm of $\phi$ is independent of
the choice of an orthonormal basis of $H$.

Next, recall the definition of a cylindrical Wiener process $ W $ on $ L^2(\R^d) $.
 Let $(\O, \F, P)$ be  a probability space  endowed with a filtration $\{ \F_t \}_{t \ge 0}$.
Fix an orthonormal basis $\{ e_n \}_{n \in \N}$ of $L^2(\R^d)$.
We define an $L^2 (\R^d)$-cylindrical Wiener process $W$ by 
\begin{align}
W(t) = \sum_{n \in \N} \beta_n (t)  e_n ,
\label{Psi1}
\end{align}

\noi
where $\{ \beta_n \}_{n \in \N}$ is a family of mutually independent complex-valued Brownian motions\footnote{Namely, 
the real and imaginary parts of $\be_n$ are  independent (real-valued) Brownian motions.} associated with the filtration $\{ \F_t \}_{t \ge 0}$ .
Note that a space-time white noise $\xi$ is given by a distributional derivative (in time) 
of $W$.
Hence, 
we can express the stochastic convolution~$\Psi$ in \eqref{SNLS3} as
\begin{align}
 \begin{split}
\Psi (t) & =  - i \int_0^t S(t - t') \phi dW (t')\\
& = -i \sum_{n \in \N} \int_0^t S(t-t') \phi e_n  \, d \beta_n (t').
\end{split}
\label{Psi2}
\end{align}

%

The next lemma summarizes
the regularity properties
of the stochastic convolution.

\begin{lemma}\label{LEM:stoconv}
Let $ d \geq 1 $, $ T > 0 $, and $ s \in \R $. Suppose that $ \phi \in \HS(L^2;H^s) $.

\smallskip

\noi

\begin{itemize}
\item[\textup{(i)}]
We have $ \Psi \in C([0,T];H^s(\R^d)) $ almost surely.

\smallskip

\item[\textup{(ii)}]
Given any  $1 \leq q < \infty$ and finite $r \geq 2$ such that $ r \le \frac{2d}{d-2}$ when $d \geq 3$, 
we have
$\Psi \in L^q([0, T];  W^{s, r}(\R^d))$
almost surely.

\smallskip

\item[\textup{(iii)}]
Given $ b < \frac 12$, 
we have $\Psi\in X^{s,b}([0,T])$ almost surely. Moreover,  there exists $\ta > 0$ such that
\begin{align}
\E\Big[  \|\Psi\|^{p}_{X^{s,b}([0,T])}    \Big] 
\les p^\frac{p}{2} \jb{T}^{\ta p}  \|\phi\|^{p}_{\HS(L^2;H^s)}
\label{Xsb2}
\end{align}

\noi
for any finite $p \geq 1$.

\end{itemize}

\end{lemma}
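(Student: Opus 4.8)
The plan is to prove the three assertions about the stochastic convolution $\Psi$ defined in \eqref{Psi2}, treating (iii) as the main target since (i) and (ii) are fairly standard. The key tool throughout will be the stochastic factorization method together with the Burkholder--Davis--Gundy (BDG) inequality and hypercontractivity (Gaussianity of the stochastic integral). First I would fix the orthonormal basis $\{e_n\}$ and work with the explicit series representation of $\Psi$. Since $\phi \in \HS(L^2; H^s)$, the quantity $\sum_n \|\phi e_n\|_{H^s}^2 = \|\phi\|_{\HS(L^2;H^s)}^2$ is finite, and this is the fundamental resource that controls every estimate. For (i), the continuity in $H^s$ follows from the classical theory of stochastic convolutions (as cited via \cite{DZ}): the semigroup $S(t) = e^{it\Delta}$ is a unitary group on every $H^s$, so $\|S(t)\phi e_n\|_{H^s} = \|\phi e_n\|_{H^s}$, and the Hilbert--Schmidt assumption gives the trace-class condition needed for the convolution to have a continuous $H^s$-valued modification.

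For (ii), the strategy is to estimate $\E\big[\|\Psi\|_{L^q_T W^{s,r}_x}^p\big]$ for large $p$ and then apply a Fubini/Minkowski interchange. Because $\Psi(t)$ is a centered Gaussian random variable in $H^s$ (it is a stochastic integral against a linear isometry), I would use hypercontractivity to reduce the $L^p_\omega$ norm to the $L^2_\omega$ norm up to a factor $p^{1/2}$. For fixed $(t,x)$, one computes
\begin{align*}
\E\big[|\jb{\nabla}^s \Psi(t,x)|^2\big]
= \sum_{n} \int_0^t |\jb{\nabla}^s S(t-t')\phi e_n(x)|^2\, dt',
\end{align*}
and then integrating in $x$ and $t$, using that $S(t-t')$ is unitary on $H^s$ and Sobolev embedding $H^s \hookrightarrow W^{s,r}$ under the stated admissibility $r \le \frac{2d}{d-2}$ (for $d\ge 3$), I would bound everything by $\jb{T}^{\text{const}}\|\phi\|_{\HS(L^2;H^s)}^2$. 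Applying Minkowski's inequality to move the $L^p_\omega$ inside the space-time integrals (valid since $p \ge q, r$ after choosing $p$ large) yields the claim; the almost-sure finiteness then follows.

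The substance of the lemma is (iii), and here I would adopt the stochastic factorization approach adapted to the $X^{s,b}$ setting with $b < \frac12$. The point is that the $X^{s,b}$-norm with $b<\frac12$ is forgiving enough to absorb the temporal roughness of white noise, which is precisely why the restriction $b<\frac12$ appears. The plan is to compute $\E\big[\|\Psi\|_{X^{s,b}([0,T])}^p\big]$ directly from the Fourier-side definition \eqref{Xsb}. Writing $\ft\Psi(\tau,\xi)$ and using that $\Psi$ is Gaussian, I would first reduce to $p=2$ via hypercontractivity (giving the $p^{p/2}$ factor), then compute
\begin{align*}
\E\big[\|\Psi\|_{X^{s,b}}^2\big]
= \int \jb{\xi}^{2s}\jb{\tau+|\xi|^2}^{2b}\, \E\big[|\ft\Psi(\tau,\xi)|^2\big]\, d\tau\, d\xi,
\end{align*}
and evaluate the covariance of the Fourier transform of the stochastic integral using Ito isometry. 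The critical computation is that the $\tau$-integral $\int \jb{\tau+|\xi|^2}^{2b}\, d\mu(\tau)$ against the spectral measure of the white-in-time noise converges exactly when $2b < 1$, i.e.\ $b<\frac12$; this is where the temporal regularity threshold enters. I expect the main obstacle to be handling this $\tau$-integral carefully on the \emph{local-in-time} interval $[0,T]$: because of the extension/restriction definition \eqref{embed2} of $X^{s,b}(J)$, a time cutoff must be inserted, and its Fourier transform spreads the frequency support in $\tau$. I would address this by using a smooth temporal cutoff $\chi_T$ and exploiting that multiplication by $\chi_T$ is bounded on $X^{s,b}$ for $b<\frac12$ with a constant growing polynomially in $T$, yielding the factor $\jb{T}^{\theta p}$. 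Collecting the $L^2_\omega$ bound, the cutoff cost, and the hypercontractivity factor produces \eqref{Xsb2}; almost-sure membership in $X^{s,b}([0,T])$ then follows since the $p$-th moment is finite for every finite $p$.
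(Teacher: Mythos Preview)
Your treatment of (i) and (iii) is essentially correct and aligns with the paper's approach: the paper does not give a self-contained proof but cites \cite{DZ} for (i), and for (iii) points to \cite{DDT, O09, CM} for the $p=2$ case followed by the Wiener chaos estimate (i.e., Gaussian hypercontractivity) to lift to general $p$. Your outline --- compute the second moment on the Fourier side via the Ito isometry, observe that the $\tau$-integral against $\jb{\tau+|\xi|^2}^{2b}$ converges precisely when $b<\tfrac12$, insert a temporal cutoff for the local-in-time norm, and then invoke hypercontractivity for the $p^{p/2}$ factor --- is exactly this strategy.

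There is, however, a genuine gap in your argument for (ii). You invoke a ``Sobolev embedding $H^s \hookrightarrow W^{s,r}$ under the stated admissibility $r \le \frac{2d}{d-2}$''; this embedding is false for every $r>2$ on $\R^d$ (indeed $H^s = W^{s,2}$ embeds into $W^{s,r}$ only for $r\le 2$). The condition $r \le \frac{2d}{d-2}$ is not a Sobolev threshold but the \emph{Strichartz} endpoint, and Strichartz is the correct tool here: after the Minkowski/hypercontractivity step you are left with $\sum_n \int_0^t \|\jb{\nabla}^s S(t-t')\phi e_n\|_{L^r_x}^2\,dt'$, and this is bounded via the Strichartz estimate \eqref{Str1} (together with H\"older in time when $r < \frac{2d}{d-2}$) by $\jb{T}^{\theta}\|\phi\|_{\HS(L^2;H^s)}^2$. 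The paper handles (ii) by citing \cite{DD2, OPW}, where the argument proceeds either through the stochastic factorization method combined with Strichartz (\cite{DD2}) or through the pointwise Gaussian moment computation combined with Strichartz (\cite{OPW}); in both cases it is the dispersive smoothing of $S(t)$, not Sobolev embedding, that upgrades spatial integrability from $L^2$ to $L^r$.
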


Regarding the proof of Lemma \ref{LEM:stoconv}, 
see~\cite{DZ} for~(i)
and~\cite{DD2, OPW} for (ii).
As for (iii), 
see \cite[Proposition 2.1]{DDT}, 
\cite[Proposition 4.1]{O09},  and \cite[Lemma 3.3]{CM} for the proofs of the $X^{s, b}$-regularity of the stochastic
convolution.
The works  \cite{DDT, O09, CM}  treat  a different equation (the KdV equation)
and/or a different setting (on the circle) but the proofs can be easily adapted to our context.
For example, one can follow the argument in the proof of 
Proposition~2.1 in~\cite{DDT}
to obtain \eqref{Xsb2} for $p = 2$.
Then, by noting from~\eqref{Psi2}
that the stochastic convolution~$\Psi$
is nothing but (a limit of) a linear combination of Wiener integrals, 
the general case follows from the $p =2 $ case and the Wiener chaos estimate;\footnote{The proof of Proposition 2.1 in \cite{DDT} reduces to bounding the second moment of the $H^b$-norm
of a certain (scalar) Wiener integral.
Similarly, the proof of Lemma \ref{LEM:stoconv}\,(iii) for general finite $p \geq 1$ reduces
to bounding the $p$th moment of the $H^b$-norm of the same Wiener integral. 
By the Wiener chaos estimate, 
this can be further reduced to bounding the second moment.}
see, for example,  \cite[Lemma 2.5]{GKO2}.  See also 
\cite[Theorem I.22]{Simon} and \cite[Proposition 2.4]{TTz}.

Once we have Lemma \ref{LEM:stoconv}, 
we can use the Strichartz estimates \eqref{Str1} (without the $X^{s, b}$-spaces)
to  prove
 local well-posedness of SNLS 
 \eqref{SNLS0}
in $H^s(\R^3)$ for $s \geq s_\text{crit} = \frac 12$, 
provided that $\phi \in HS(L^2; H^s)$.
See \cite{DD2, OO}.
In particular, for the subcritical range $s > \frac 12$, 
the random time $\dl = \dl(\o) $ of local existence, starting from $t = t_0$,  
satisfies
\begin{align*}
\dl \ges \Big( \| u(t_0)\|_{H^s} + C_{t_0}(\Psi) \Big)^{-\ta}
\end{align*}

\noi
for some $\ta > 0$, 
where 
$C_{t_0}(\Psi)>0$ denotes certain Strichartz norms
of the stochastic convolution $\Psi$, restricted to a time interval $[t_0, t_0 + 1]$.
Given $T > 0$, it follows from Lemma~\ref{LEM:stoconv}
that $C_{t_0}(\Psi)$ remains finite almost surely for any $t_0 \in [0, T]$. 
Therefore, 
Theorem~\ref{THM:main} follows once we show that 
$\sup_{t \in [0, T]} \| u(t) \|_{H^s}$ remains finite almost surely for any $T>0$
(with a bound depending on $T>0$).

Lastly, we recall the a priori mass control from \cite{DD2}
whose proof follows from Ito's lemma applied to the mass $M(u)$
in \eqref{cons}
and Burkholder-Davis-Gundy inequality
(see \cite[Theorem 4.36]{DZ}).

\begin{lemma} \label{LEM:bound}
Assume $ \phi \in \HSi $
 and  $ u_0\in L^2(\R^3) $. 
Let  $u$ be the solution to  SNLS~\eqref{SNLS0} with $u|_{t = 0}=u_0$
and 
 $T^{\ast} = T^{\ast}_{\omega} (u_0)$ be the forward maximal time of existence.
Then, given $T>0$, 
there exists $C_1 = C_1 (M(u_0), T, \| \phi \|_{\HS (L^2; L^2)})>0$ 
such that for any stopping time $\tau$ with $0<\tau< \min (T^{\ast}, T)$ almost surely, we have
\begin{align}
\E \bigg[ \sup_{0\le t \le \tau} M(u(t)) \bigg] \le C_1.
\label{M1}
\end{align}

\end{lemma}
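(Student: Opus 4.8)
The plan is to apply It\^o's lemma to the mass functional $M(u(t)) = \int_{\R^3} |u(t,x)|^2 dx$ along the solution flow of SNLS \eqref{SNLS0}, and then control the resulting martingale term using the Burkholder-Davis-Gundy inequality. First I would recall that for the deterministic defocusing NLS the mass is exactly conserved, since $\frac{d}{dt} M(u) = 2\Re \int \cj{u}\, i^{-1}(\Dl u - |u|^2 u)\, dx = 0$ (the $\Dl u$ contribution vanishes by integration by parts and the $|u|^2 u$ contribution is purely imaginary after multiplication by $\cj u$, so its real part is zero). Consequently, when we add the additive noise $\phi\xi = \phi\, dW$ and apply It\^o's lemma to the (non-random part plus) quadratic functional $M$, the drift coming from the nonlinearity and dispersion drops out, leaving only the contributions from the stochastic forcing.

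The key computation is therefore the It\^o expansion of $M(u(t))$. Writing the mild/strong form of \eqref{SNLS0} and applying It\^o's lemma to the quadratic functional $u \mapsto \|u\|_{L^2}^2$, I expect two stochastic contributions: a martingale part of the form
\begin{align*}
2 \Re \sum_{n\in\N} \int_0^t \int_{\R^3} \cj{u(t')}\, (-i)\phi e_n \, dx \, d\be_n(t'),
\end{align*}
coming from the cross term between the solution and the increment of the noise, and an It\^o correction (quadratic variation) term of the form
\begin{align*}
t \sum_{n\in\N} \| \phi e_n \|_{L^2}^2 = t\, \| \phi \|_{\HS(L^2;L^2)}^2,
\end{align*}
coming from the second-order term in It\^o's formula. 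Thus, up to the stopping time $\tau$, one obtains an identity of the schematic form $M(u(t)) = M(u_0) + t\,\|\phi\|_{\HS(L^2;L^2)}^2 + (\text{martingale})$. The deterministic correction term is controlled by $T\,\|\phi\|_{\HS(L^2;L^2)}^2$ on $[0,\tau] \subset [0,T]$, which is finite under the hypothesis $\phi \in \HSi$.

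To estimate $\E[\sup_{0\le t\le\tau} M(u(t))]$, I would take the supremum in the identity above, take expectations, and apply the Burkholder-Davis-Gundy inequality to the martingale term. Its quadratic variation is bounded by
\begin{align*}
\int_0^\tau \sum_{n\in\N} \bigg| \int_{\R^3} \cj{u(t')}\, \phi e_n\, dx \bigg|^2 dt'
\les \int_0^\tau M(u(t')) \,\|\phi\|_{\HS(L^2;L^2)}^2 \, dt',
\end{align*}
by Cauchy-Schwarz applied to each inner product $|\int \cj u\,\phi e_n|^2 \le M(u) \|\phi e_n\|_{L^2}^2$ and summation over $n$. BDG then yields a bound of the form $C\,\E[(\int_0^\tau M(u) \,dt')^{1/2}]\,\|\phi\|_{\HS}$, which after a Cauchy-Schwarz in the probability space and Young's inequality can be absorbed: the $\sup$ is extracted as $\E[(\sup_{0\le t\le\tau} M(u))^{1/2}(\int_0^\tau M\, dt')^{1/2}] \le \tfrac12 \E[\sup M] + C \E[\int_0^\tau M\,dt']$. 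The half of $\E[\sup M]$ is absorbed into the left-hand side, and the remaining integral term is handled by Gronwall's inequality (or a direct iteration), producing a constant $C_1$ depending only on $M(u_0)$, $T$, and $\|\phi\|_{\HS(L^2;L^2)}$, as claimed. The main obstacle is the careful justification that It\^o's lemma may be applied to the $L^2$-functional for the low-regularity solution $u$ — strictly speaking one should work with a smooth/frequency-truncated approximation of the equation (or with the Galerkin scheme) where It\^o's formula is classical, derive the identity there, and pass to the limit using the stability of the local solutions and the a priori bounds; the algebraic cancellation of the nonlinear drift and the structure of the correction terms are otherwise routine.
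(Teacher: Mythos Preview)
Your proposal is correct and follows precisely the approach the paper indicates: the paper does not give a detailed proof of this lemma but simply cites \cite{DD2}, stating that the result follows from It\^o's lemma applied to the mass $M(u)$ together with the Burkholder--Davis--Gundy inequality, which is exactly what you carry out. The only minor point is that, under the paper's convention that the $\beta_n$ are complex-valued Brownian motions (see Remark~\ref{REM:martin}\,(ii)), the It\^o correction term picks up an extra factor of $2$, but this does not affect the argument.
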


\begin{remark}\label{REM:L2bound}
\rm 
In view of Lemma \ref{LEM:bound}, 
given finite $T > 0$, the $L^2$-norm of the solution remains
bounded almost surely on  $[0, T^*_\o\wedge T]$, 
where $T^*_\o$ is the forward maximal  time 
of existence.\footnote{\label{FT1}It can be shown  that the left-hand side
of \eqref{M1} grows at most linearly in $T$.
See, for example, the proof of Proposition 3.2 in \cite{DD2}
and the proof of Proposition 5.1 in \cite{CM}.}
Therefore, it follows from the discussion above that, 
in order to prove Theorem~\ref{THM:main},
it suffices to  show that 
the homogeneous Sobolev norm
$ \| u(t) \|_{\dot H^s}$ remains finite 
almost surely
on each bounded time interval $[0, T]$.
In the following, our analysis involves only homogeneous Sobolev spaces.

\end{remark}

\section{$I$-operator, $ I $-SNLS, and their scaling properties}
\label{SEC:I}

\subsection{$I$-operator}\label{SUBSEC:Iop}

In \cite{Bo98}, Bourgain introduced the so-called high-low method
in establishing global well-posedness of the defocusing cubic NLS on $\R^2$
below the energy space.
The high-low method is based on truncating the dynamics by a sharp frequency cutoff
and separately studying the low-frequency and high-frequency dynamics.
In \cite{CKSTT0}, 
Colliander, Keel, Staffilani, Takaoka,  and Tao
proposed to use a smooth positive frequency multiplier instead.

Let $0 < s < 1$.
Given $N \geq 1$, we define a 
smooth, radially symmetric, non-increasing (in $|\xi|$)
multiplier $m_N$, satisfying
\begin{equation}
m_N(\xi)=
\left\{\begin{array}{ll}
1, & \text{for } |\xi|\le N, \\
\big(\frac{N}{|\xi|}\big)^{1-s}, & \text{for } |\xi|\ge2N.
\end{array}\right.
\label{Ix1}
\end{equation} 

\noi
Since $m_N$ is radial, 
with a slight abuse of notation, 
we may use the notation $m_N(|\xi|)$
by viewing $m_N$ as a function on $[0, \infty)$.

We then define the $I$-operator $I = I_N$
to be the Fourier multiplier operator with the multiplier $m_N$:
\begin{align}
\ft{I_Nf}(\xi)=m_N(\xi)\ft{f}(\xi).
\label{Ix2}
\end{align}

\noi
As mentioned in the introduction, 
$I_N$ acts as the identity operator on low frequencies $\{ |\xi| \leq N\}$, 
while it acts as a fractional  
integration operator of order $1-s$ on high frequencies $\{ |\xi| \geq 2N\}$.
As a result, we have the following bound:
\begin{align}
\|f\|_{\dot H^s}\les \|f\|_{L^2} + \|I_N f\|_{\dot H^1}
\qquad \text{and} \qquad 
\|I_N f\|_{\dot H^1}\les  N^{1-s}\|f\|_{\dot H^s}.
\label{I2}
\end{align}

\subsection{$I$-SNLS}
\label{SUBSEC:lSNLSI}

By applying the $I$-operator to SNLS \eqref{SNLS0}, 
we obtain the following $I$-SNLS:
\begin{equation}\label{SNLSI}
\begin{cases}
i\dt I_Nu + \Dl I_Nu=I_N(|u|^2u)+I_N\phi \xi    \\ 
I_Nu|_{t=0}=I_Nu_0\in H^1(\R^3).
\end{cases}
\end{equation}

\noi
In this subsection, we study  local well-posedness  of 
the Cauchy problem~\eqref{SNLSI}. 
A similar local well-posedness result for the (deterministic) $I$-NLS
(namely, \eqref{SNLSI} with $\phi = 0$)
was  studied in \cite[Proposition 4.2]{CKSTT0}. 
In order to capture the temporal regularity 
of the stochastic convolution
(Lemma \ref{LEM:stoconv}), 
we need to work with the $X^{s, b}$-space with $b < \frac 12$
and hence need to establish a trilinear estimate in this setting.
See Lemma \ref{LEM:Trilin} below.
The following proposition  allows us to avoid using the
$ L^2$-norm which is supercritical with respect to scaling
(as in \cite{CKSTT0}).

\begin{proposition}\label{PROP:LWP}
Let $\frac{1}{2}<s<1$, $ \phi\in \HS(L^2;\dot{H}^s) $, and
$u_0 \in \dot H^s(\R^3)$. 
Then, there exist an almost surely positive stopping time 
\[ \dl=\dl_\o\big(\|I_N u_0\|_{\dot H^1}, \|I_N \phi\|_{\HS(L^2; \dot H^1)}\big) \]

\noi
 and a unique local-in-time solution $ I_N u\in C( [0,\dl];\dot{H}^1(\R^3) ) $ to  $ I $-SNLS \eqref{SNLSI}. 
Furthermore, if $ T^*=T^*_{\o} $ denotes the forward maximal time of existence, 
 the following blowup alternative holds:
\begin{align}
T^*=\infty \qquad \text{or} \qquad \lim_{T\nearrow T^*} \|I_N u\|_{L^\infty_{T}\dot{H}^1_x}=\infty.
\label{LWP2}
\end{align}

\end{proposition}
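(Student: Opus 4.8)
The plan is to prove Proposition~\ref{PROP:LWP} by a standard fixed-point argument on the Duhamel formulation of $I$-SNLS, carried out pathwise (i.e.\ for a.e.\ fixed $\o$) in the space $X^{1,b}_\dl$ with $b<\frac12$ chosen close enough to $\frac12$ so that the linear estimates in Lemma~\ref{LEM:lin}\,(i)--(ii) still apply. Applying $I_N$ to \eqref{SNLS2}, the mild solution of \eqref{SNLSI} satisfies
\begin{align*}
I_N u(t) = S(t) I_N u_0 - i\int_0^t S(t-t') I_N(|u|^2 u)(t')\, dt' + I_N \Psi(t),
\end{align*}
where $\Psi$ is the stochastic convolution \eqref{SNLS3}. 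First I would set $v = I_N u$ and recast the nonlinearity so that everything is expressed in terms of $v$ via $u = I_N^{-1} v$; the term $I_N(|u|^2u) = I_N\big(|I_N^{-1}v|^2 I_N^{-1}v\big)$. Defining the solution map $\Gamma$ by the right-hand side above, the task is to show $\Gamma$ is a contraction on a ball in $X^{1,b}_\dl$, with the radius and $\dl$ chosen depending only on $\|I_N u_0\|_{\dot H^1}$ and the relevant $X^{1,b}$-norm of $I_N\Psi$, which by Lemma~\ref{LEM:stoconv}\,(iii) is a.s.\ finite and controlled by $\|I_N\phi\|_{\HS(L^2;\dot H^1)}$.

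The key analytic input is the trilinear estimate advertised as Lemma~\ref{LEM:Trilin}, namely a bound of the form
\begin{align*}
\big\|I_N(|I_N^{-1}v|^2 I_N^{-1}v)\big\|_{X^{1,b-1+\ta}_\dl} \lesssim \|v\|_{X^{1,b}_\dl}^3,
\end{align*}
proved with the weaker temporal exponent $b<\frac12$. Granting this, the nonhomogeneous linear estimate Lemma~\ref{LEM:lin}\,(ii) supplies a factor $\dl^\ta$, so that the Duhamel term is bounded by $C\dl^\ta \|v\|^3_{X^{1,b}_\dl}$; the homogeneous estimate Lemma~\ref{LEM:lin}\,(i) handles $S(t) I_N u_0$. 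Choosing the ball radius $R \sim \|I_N u_0\|_{\dot H^1} + \|I_N\Psi\|_{X^{1,b}_\dl}$ and then $\dl$ small enough (quantitatively, $\dl^\ta R^2 \ll 1$) makes $\Gamma$ map the ball to itself and be a contraction there; uniqueness and continuity in time follow from the embedding of $X^{1,b}$ into $C_t \dot H^1_x$ (the analogue of \eqref{embed1}). I would define $\dl = \dl_\o$ via a stopping time by first fixing a deterministic threshold determined by $\|I_N u_0\|_{\dot H^1}$ and then taking $\dl$ to be the first time the pathwise norm $\|I_N\Psi\|_{X^{1,b}}$ on $[0,\dl]$ exceeds a comparable threshold; this makes $\dl$ measurable and a.s.\ positive because $I_N\Psi \in X^{1,b}([0,T])$ a.s.\ with $I_N\Psi(0)=0$.

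The main obstacle I expect is proving the trilinear estimate with $b<\frac12$ rather than the customary $b>\frac12$, since the usual $L^\infty_t$ control from \eqref{embed1} is lost. The remedy, as indicated in the text, is to trade temporal regularity for spatial regularity via interpolation, using the $L^p_{t,x}$ Strichartz bound of Lemma~\ref{LEM:lin}\,(iv) that holds at $b=\frac12-\eps$, together with the smoothing properties of $I_N$ recorded in \eqref{I2}; the commutator-type structure ensures the $I_N^{-1}$ factors do not destroy the estimate. Once the trilinear estimate and the a.s.\ finiteness of $\|I_N\Psi\|_{X^{1,b}}$ are in hand, the contraction argument is routine. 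Finally, the blowup alternative \eqref{LWP2} follows from the standard continuation argument: as long as $\|I_N u\|_{L^\infty_T \dot H^1_x}$ stays bounded, the local existence time $\dl$ stays bounded below (again using that the $X^{1,b}$-norm of $I_N\Psi$ is a.s.\ finite on any $[0,T]$), so the solution extends, and hence $T^*=\infty$ unless the $\dot H^1$-norm of $I_N u$ blows up as $T\nearrow T^*$.
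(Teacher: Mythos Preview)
Your approach is essentially the same as the paper's: write the Duhamel formulation, invoke the trilinear estimate Lemma~\ref{LEM:Trilin} with temporal exponent below $\frac12$, feed it into the linear estimates, and close a contraction in a ball of radius comparable to $\|I_Nu_0\|_{\dot H^1}+\|\nb I_N\Psi\|_{X^{0,\frac12-\eps}_\dl}$. The paper works with the homogeneous norm $\|\nb I_N u\|_{X^{0,\frac12-\eps}}$ rather than your $\|I_N u\|_{X^{1,b}}$, but the skeleton is identical.

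There is, however, one genuine oversight. You write that ``continuity in time follows from the embedding of $X^{1,b}$ into $C_t\dot H^1_x$ (the analogue of \eqref{embed1})'', but \eqref{embed1} requires $b>\frac12$, while your solution space has $b<\frac12$. Relatedly, Lemma~\ref{LEM:lin}\,(ii) as stated in the paper is for $b>\frac12$, so it does not directly apply with $b<\frac12$ as you suggest. The paper resolves both issues by an asymmetric choice of exponents: in \eqref{ZX2} the Duhamel integral is placed in the \emph{stronger} space $X^{0,\frac12+\eps}_\dl$ (this is where Lemma~\ref{LEM:lin}\,(ii) is invoked, with $b=\frac12+\eps>\frac12$ and $\ta=\eps$), and the trilinear estimate \eqref{tri1} is arranged so that the nonlinearity sits in $X^{0,-\frac12+2\eps}$ while the three inputs only need $X^{0,\frac12-\eps}$. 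Continuity then comes term by term: the Duhamel term is in $C_t\dot H^1$ via \eqref{embed1} since it actually lies in $X^{0,\frac12+\eps}$; $S(t)I_Nu_0\in C_t\dot H^1$ is Lemma~\ref{LEM:lin}\,(i); and $I_N\Psi\in C_t\dot H^1$ is Lemma~\ref{LEM:stoconv}\,(i). You should make this asymmetric bookkeeping explicit in your write-up, since otherwise neither the nonhomogeneous linear estimate nor the time-continuity claim goes through as stated.
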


Proposition \ref{PROP:LWP} follows from a standard contraction argument once we prove
the following trilinear estimate.

\begin{lemma}\label{LEM:Trilin}
Let $\frac{1}{2}<s<1$.   Then, there exists small $\eps > 0$ such that 
\begin{align}
\|\nb I_N(u_1\cj{u_2}u_3)\|_{X^{0,-\frac{1}{2}+2\eps}_T}
\les \prod_{j=1}^{3}\|\nb I_N u_j\|_{X^{0,\frac{1}{2}-\eps}_T}
\label{tri1}
\end{align}

\noi
for any $0 \leq T \leq 1$, 
where the implicit constant is independent of  $ N \geq 1$.
\end{lemma}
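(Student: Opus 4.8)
```latex
The plan is to establish the trilinear estimate \eqref{tri1} by working on the
frequency side and exploiting the commutator-type structure of $\nb I_N$ acting
on a product. The key reduction, standard in the $I$-method, is the pointwise
multiplier bound
\[
m_N(\xi_1 + \xi_2 + \xi_3)\, |\xi_1 + \xi_2 + \xi_3|
\les \sum_{j=1}^3 m_N(\xi_j)\, \jb{\xi_j}
\]
whenever $\xi = \xi_1 + \xi_2 + \xi_3$, which follows from the facts that
$m_N(\xi)\jb{\xi}$ is (essentially) non-decreasing in $|\xi|$ and that the largest
output frequency is comparable to the largest input frequency. This lets us place
the derivative and the $I$-operator onto whichever factor carries the highest
frequency, so that \eqref{tri1} is reduced to the $N$-independent trilinear bound
\begin{align*}
\|u_1 \cj{u_2} u_3\|_{X^{0,-\frac12+2\eps}_T}
\les \prod_{j=1}^3 \|u_j\|_{X^{0,\frac12-\eps}_T}
\end{align*}
(after relabeling, with the derivative distributed); the implicit constant must
be uniform in $N$, which is automatic once all $N$-dependence has been absorbed
into the multiplier comparison above.

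To prove the $N$-free trilinear estimate, I would dualize and pass to the Fourier
transform: by duality it suffices to bound
\[
\int \ft{v}(\tau,\xi)\, \widehat{u_1 \cj{u_2} u_3}(\tau,\xi)\,d\tau\, d\xi
\]
with $\|v\|_{X^{0,\frac12-2\eps}_T}\le 1$, which unfolds into a convolution
integral over the frequencies $(\tau_j,\xi_j)$. The standard route is then to
apply Hölder in physical space after discarding the weights: writing
$b = \frac12-\eps$, the bound \eqref{Str2} from Lemma~\ref{LEM:lin}\,(iv) gives
$\|u_j\|_{L^p_{t,x}}\les \|u_j\|_{X^{0,\frac12-\eps}}$ for any
$2\le p<\frac{10}{3}$. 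Choosing exponents so that four factors (the three inputs
together with the dual function $v$, the latter controlled by
$X^{0,\frac12-2\eps}$) fit into a Hölder product — for instance distributing the
space-time integrability across four copies of an $L^p_{t,x}$ space with
$\frac{4}{p}=1$, i.e.\ $p=4$ — would overshoot the range $p<\frac{10}{3}$, so
instead I would use the slightly subcritical room in $\eps$: place the highest
two frequencies in $L^{\frac{10}{3}-}_{t,x}$ and the remaining factors in a
larger Lebesgue space, balancing the exponents via the admissibility relation.
The local-in-time factor $0\le T\le 1$ poses no difficulty since restricting to
$[0,T]$ only decreases norms, and no positive power of $T$ is claimed.

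The main obstacle I anticipate is the loss of one half-derivative in the
temporal weight: the output sits in $X^{0,-\frac12+2\eps}$ rather than
$X^{0,-\frac12-}$, so there is genuinely \emph{less} than a full derivative gain
in time, reflecting the fact that we are forced to work with $b<\frac12$ to
accommodate the limited temporal regularity of the stochastic convolution noted
in Lemma~\ref{LEM:stoconv}\,(iii). In the deterministic treatment one has the
luxury of $b>\frac12$ and the embedding \eqref{embed1}; here the endpoint is
unavailable, so I would not expect a clean Sobolev-embedding argument. The remedy
is precisely the interpolation strategy flagged in the introduction (the
\emph{weaker temporal regularity} paragraph): establish the trilinear estimate
first at $b=\frac12+$ by the classical $X^{s,b}$ multilinear analysis, then
interpolate against an estimate that trades a small amount of spatial regularity
for the reduction of $b$ below $\frac12$, at the cost of the factor of $2\eps$ in
the exponent. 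Verifying that the frequency comparison for $m_N$ survives this
interpolation with an $N$-uniform constant, and that the resulting spatial loss
remains compatible with the scaling restriction $s>\frac12$, is where the
technical care will concentrate.
```
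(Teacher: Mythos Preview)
Your reduction has a genuine gap. The Leibniz-type multiplier bound
$m_N(\xi_1+\xi_2+\xi_3)\,|\xi_1+\xi_2+\xi_3|\les\sum_j m_N(\xi_j)\jb{\xi_j}$
only places $\nb I_N$ on \emph{one} factor; the other two remain bare
$u_2,u_3$. What you must then prove is
\[
\big\|(\nb I_N u_1)\,\cj{u_2}\,u_3\big\|_{X^{0,-\frac12+2\eps}}
\les \|\nb I_N u_1\|_{X^{0,\frac12-\eps}}
     \|\nb I_N u_2\|_{X^{0,\frac12-\eps}}
     \|\nb I_N u_3\|_{X^{0,\frac12-\eps}},
\]
which is still $N$-dependent through the bare factors. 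The ``$N$-free''
estimate you wrote,
$\|u_1\cj{u_2}u_3\|_{X^{0,-\frac12+2\eps}}\les\prod_j\|u_j\|_{X^{0,\frac12-\eps}}$,
is simply false in $\R^3$: you already observed the obstruction, since by
duality it would force four factors into $L^4_{t,x}$ while
$X^{0,\frac12-}$ embeds only into $L^{p}_{t,x}$ for $p<\tfrac{10}{3}$.
There is no rearrangement of H\"older exponents that saves this, because
the cubic NLS on $\R^3$ has critical regularity $s_{\mathrm{crit}}=\tfrac12$,
not $0$.

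The paper does not strip the derivatives. After reducing to $N=1$ via the
interpolation lemma of \cite{CKSTT2} and applying the Leibniz rule, it
retains $\|\nb I u_j\|_{X^{0,\frac12-}}$ on the right for \emph{all} $j$
and uses this extra smoothing to place the bare factors in $L^{5+}_{t,x}$
or $L^6_{t,x}$ rather than $L^{10/3-}_{t,x}$: for high frequencies
$I\sim|\nb|^{s-1}$, so Sobolev embedding together with an interpolated
Strichartz bound gives
$\|u_j^{\hi}\|_{L^{5+}_{t,x}}\les\|\nb I u_j\|_{X^{0,\frac12-}}$ precisely
when $s>\tfrac12$, and for low frequencies
$\|u_j^{\lo}\|_{L^6_{t,x}}\les\|\nb I u_j\|_{X^{0,\frac12-}}$ is direct.
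Then a single H\"older application
$L^{\frac{10}{3}-}\times L^{p_2}\times L^{p_3}\times L^{p_4}$ closes, with
$p_2,p_3\in\{5+,6\}$ and $p_4<\tfrac{10}{3}$ for the dual function. The
passage to $b=\tfrac12-\eps$ is handled not by interpolating the full
trilinear estimate but by interpolating these individual embeddings
(e.g.\ \eqref{Trilin5} against \eqref{Trilin6}); this costs only an
arbitrarily small amount of spatial regularity, which is absorbed by the
strict inequality $s>\tfrac12$.
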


As compared to Proposition 4.2 in \cite{CKSTT0}, 
we need to work with a slightly weaker  temporal regularity on the right-hand side of \eqref{tri1}.

Before going over a proof of Lemma \ref{LEM:Trilin}, 
let us briefly discuss a proof of Proposition \ref{PROP:LWP}.
By writing \eqref{SNLSI} in the Duhamel formulation, we have
\begin{align*}
\begin{split}
I_N u(t) 
& = \Phi(I_Nu) \\
: \!& = S(t) I_N u_0 
- i \int_0^t S(t - t') I_N(|u|^2 u)(t') dt' + I_N \Psi(t), 
\end{split}
\end{align*}

\noi
where
$\Phi =  \Phi_{I_N u_0, I_N \phi}$
and  we interpreted the nonlinearity as
a function of $I_Nu$:
\[ I_N(|u|^2 u)
= I_N(|I_N^{-1} (I_N u)|^2 I_N^{-1} (I_N u)).\]

\noi
Fix small $\eps > 0$.
Then, by Lemmas \ref{LEM:lin} and \ref{LEM:stoconv}
followed by Lemma \ref{LEM:Trilin}, 
we have
\begin{align}
\begin{split}
\|\nb \Phi(I_Nu) \|_{X^{0, \frac{1}{2}-\eps}_\dl}
& \le \| \nb S(t) I_N u_0 \|_{X^{0, \frac{1}{2}-\eps}_\dl}\\
& \hphantom{X}
+ \bigg \| \nb \int_0^t S(t - t') I_N(|u|^2 u)(t') dt' \bigg\|_{X^{0, \frac{1}{2}+\eps}_\dl}
+  \| \nb I_N \Psi\|_{X^{0, \frac{1}{2}-\eps}_\dl}\\
& \les \| I_N u_0 \|_{\dot H^1}
+ \dl^\eps \| \nb I_N(|u|^2 u)\|_{X^{0, - \frac12 + 2 \eps}_\dl}  + C_\o \| I_N \phi \|_{\HS(L^2; \dot H^1)}\\
& \les \| I_N u_0 \|_{\dot H^1}
+ C_\o \| I_N \phi \|_{\HS(L^2; \dot H^1)}
+ \dl^\eps \| \nb I_N u\|_{X^{0, \frac 12 - \eps}_\dl}^3  
\end{split}
\label{ZX2}
\end{align}

\noi
for an almost surely finite random constant $C_\o>0$
and for any $0 \leq \dl \leq 1$.
Similarly, we have 
\begin{align}
\begin{split}
\|\nb (\Phi( & I_Nu)  - \Phi(I_Nv)) \|_{X^{0, \frac 12 - \eps}_\dl}\\
& \les \dl^\eps \Big(\| \nb I_N u\|_{X^{0, \frac 12 - \eps}_\dl}^2  +\| \nb I_N v\|_{X^{0, \frac 12 - \eps}_\dl}^2  \Big)
\| \nb (I_N u - I_N v)\|_{X^{0, \frac 12 - \eps}_\dl}. 
\end{split}
\label{ZX3}
\end{align}

\noi
From \eqref{ZX2} and \eqref{ZX3}, we conclude that 
$\Phi$ is almost surely a contraction
on the ball of radius 
\[R = 2\Big( \| I_N u_0 \|_{\dot H^1}
+ C_\o \| I_N \phi \|_{\HS(L^2; \dot H^1)}\Big)\]

\noi
 in $\nb^{-1} X^{0, \frac 12 - \eps}$
  by choosing $\dl = \dl_\o(R) >0$ sufficiently small.
Moreover,  from 
\eqref{embed1} and
Lemmas~\ref{LEM:lin} and~\ref{LEM:stoconv}
with~\eqref{ZX2}, we also conclude that 
$ I_N u \in C( [0,\dl];\dot{H}^1(\R^3) )$.
 This proves Proposition~\ref{PROP:LWP}.
The following remark plays an important role 
in iteratively applying the local-in-time argument in 
Section \ref{SEC:end}.

\begin{remark}\rm\label{REM:local}

The argument above shows that 
there exist small $\eta_0, \eta_1 > 0$ such that 
if, for 
a given interval $ J = [t_0, t_0 + 1] \subset [0, \infty)$ of length $ 1 $
and  $ \o\in \O $, we have 
\begin{align}
E(I_N u(t_0)) \leq \eta_0
\qquad \text{and} \qquad
\|\nb I_N \Psi(\o)\|_{X^{0,\frac{1}{2}-\eps}(J)}\leq \eta_1, 
\label{ZX4}
\end{align} 

\noi
then a solution $I_N u $ to $I$-SNLS \eqref{SNLSI}
exists on the interval $J$ with the bound:
\begin{align*}
\|\nb I_N u\|_{X^{0,\frac{1}{2}-\eps}(J)}\le C_0
\end{align*}

\noi
for some absolute constant $C_0$,  
uniformly in $ N \geq 1. $
\end{remark}

We now present a proof of Lemma \ref{LEM:Trilin}.

\begin{proof}[Proof of Lemma \ref{LEM:Trilin}]
By the interpolation lemma (\cite[Lemma 12.1]{CKSTT2}), 
it suffices to prove~\eqref{tri1} for $N = 1$.
Let $I = I_1$.
By the definition \eqref{embed2} 
of the time restriction norm, duality,  and Leibniz rule for $ \nb I $, it suffices to show that\footnote{Here, 
we are essentially using the triangle inequality
$ \langle\xi_1+\cdots+ \xi_4\rangle^s\lesssim \langle\xi_1 \rangle^s +\cdots \langle\xi_4 \rangle^s $
for $s\geq 0$ and  the fact that $X^{s, b}$ is a Fourier lattice.}
\begin{align}\label{Trilin3}
\bigg|\iint_{\R\times\R^3}\big(\nb I u_1\big)\cj{u_2}u_3u_4 \,dx dt\bigg|
\les \prod_{j=1}^{3}\|\nb Iu_j\|_{X^{0,\frac{1}{2}-\eps}} \|u_4\|_{X^{0,\frac{1}{2}-2\eps}}.
\end{align}
For $j\in\{2,3\}$, we split the functions $u_j$ into high and low frequency components:
\begin{align}
u_j=u_j^\hi+u_j^{\lo},
\label{HL}
\end{align}

\noi
where the spatial Fourier supports
of $u_j^\hi$ and $u_j^\lo$ are contained in $\{|\xi|\geq \frac 12  \}$ and  $\{|\xi|\leq1 \}$, respectively.

By noting $ u_j^\lo=Iu_j^\lo$
and Sobolev's inequality
(both in space and time), we have
\begin{align}
\|u_j^\lo \|_{L^6_{t,x}}\les \|\nb I u_j\|_{X^{0,\frac{1}{2}-}}.
\label{Trilin3a}
\end{align} 

As for $u_j^\hi$, we claim 
\begin{align}
\|{u}_j^\hi\|_{L^{5+}_{t,x}}\les \|\nb I u_j\|_{X^{0,\frac{1}{2}-}}.
\label{Trilin4}
\end{align}

\noi
Since $N = 1$, we have  $I\sim |\nb |^{s-1}$.
Then, by Sobolev's inequality 
and the transference principle
(Lemma \ref{LEM:lin}\,(iii)) with 
an admissible pair $(q, r ) =\big(5+, \frac{30}{11}-\big)$, 
we have 
\begin{align}\label{Trilin5}
\begin{split}
\|u_j^\hi \|_{L^{5+}_{t,x}} 
& = 
\big\| |\nb|^{1-s} I u_j^\hi\big\|_{L^{5+}_{t,x}}
\les \big\| \jb{\nb}^{s-}  |\nb|^{1-s}  Iu_j^\hi  \big\|_{L^{5+}_{t}L^{\frac{30}{11}-}_x}\\
& \les \big\| |\nb|^{1-} Iu_j^{\hi}\big\|_{X^{0,\frac{1}{2}+}}, 
\end{split}
\end{align}

\noi
provided that 
$s > \frac 12$.
On the other hand, by Sobolev's inequality, we have 
\begin{align}
 \big\| | \nb  |^{1-s} Iu_j^\hi\big\|_{L^{5+}_{t, x}}
 \les \big\|  |\nb|^{\frac{19}{10} - s+}  Iu_j^\hi  \big\|_{X^{0,\frac{3}{10}+}}.
\label{Trilin6}
\end{align}

\noi
By 
interpolating \eqref{Trilin5} and \eqref{Trilin6}, 
we obtain \eqref{Trilin4}.

We now estimate
 \eqref{Trilin3}
by expanding $u_j$, $j = 2, 3$, 
as $u_j^\hi+u_j^{\lo}$.
For  $j = 2, 3$, 
let
$p_j = 6$ in treating $u_j^\lo$
and 
$p_j = 5+$ in treating $u_j^\hi$.
Then, 
the claimed estimate \eqref{Trilin3} follows
from 
$ L^{\frac{10}{3}-}_{t,x},  L^{p_2}_{t,x},  L^{p_3}_{t,x},  L^{p_4}_{t,x}$-H\"older's inequality, 
Lemma \ref{LEM:lin}\,(iv),  
\eqref{Trilin3a},  and \eqref{Trilin4},
where $p_4$ is defined by $\frac{1}{p_4} = 1 - \big(\frac{3}{10-}\big) - \frac{1}{p_2}
- \frac{1}{p_3}$ such that $ 2 \leq p_4 <  \frac {10}3$.
\end{proof}

\subsection{Scaling property}\label{SUBSEC:scal}

In this subsection, we discuss the scaling properties of SNLS~\eqref{SNLS0}
and  $ I $-SNLS~\eqref{SNLSI}.
Before doing so, we first recall  the scaling property of the (deterministic) cubic NLS: 
\begin{align}
i\dt u +\Dl u =|u|^2u .
\label{NLS}
\end{align}

\noi
This equation enjoys the following scaling invariance;
if $u$ is a solution to \eqref{NLS}, 
then the scaled function 
\begin{align}\label{scaling}
u^\ld (t,x) := \ld^{-1}  u(\ld^{-2}t ,\ld^{-1} x)
\end{align}

\noi
also satisfies the equation \eqref{NLS} with the scaled initial data.
In the application of the $I$-method in the deterministic case
(as in \cite{CKSTT0}), we apply this scaling first 
and then apply the $I$-operator
to obtain $I$-NLS \eqref{INLS} (with $u^\ld$ in place of $u$).

In our current stochastic setting, 
when we apply the scaling, we also need to scale the noise $\phi \xi$.
In order to apply Ito calculus
to the scaled noise, 
we need to make sure that the scaled noise is  given by 
another space-time white noise $\xi^\ld$
(with a scaled Hilbert-Schmidt operator $\phi^\ld$).
  For this purpose, we first recall the scaling property
of a space-time white noise.
Given a space-time white noise $\xi$
on $\R\times \R^d$, 
it is well known that 
the scaled noise $\xi^\ld$ defined by\footnote{Since $\xi$ is merely
a distribution, a pointwise evaluation does not quite make sense.
Strictly speaking, we need to apply
the (inverse) scaling to test functions.
For simplicity, however, we use this slightly abusive notation.} 
\begin{align}\label{noiseS}
\xi^\ld(t,x)=\xi_{a_1,a_2}^\ld (t, x) := \ld^{-\frac{a_1+da_2}{2}}
\xi(\ld^{-a_1}t,\ld^{-a_2}x)
\end{align}

\noi
is also a space-time white noise
for any $a_1, a_2 \in \R$.

Next, let us study the scaling property of the Hilbert-Schmidt operator
$\phi$ via its kernel representation.
Recall from 
\cite[Theorem VI.23]{RS}
that a bounded linear operator $\phi$ on $L^2(\R^3)$ is 
Hilbert-Schmidt if and only if it is represented
as an integral operator with 
a  kernel $k \in L^2(\R^3 \times \R^3)$:
\begin{align*}
(\phi f)(x) = \int_{\R^3} k(x, y) f(y) dy
\end{align*}

\noi
with
$ \|\phi\|_{\HS(L^2; L^2)} 
 = \| k \|_{L^2_{x, y}}.$
More generally, we have 
\begin{align}
 \|\phi\|_{\HS(L^2; \dot{H}^s)} 
 = \| k \|_{\dot{H}^s_xL^2_y}.
 \label{HS2a}
\end{align}

\noi
With this in mind,  let us evaluate 
 $\phi\xi$ 
at $(\frac{t}{\ld^2}, \frac{x}{\ld})$ with a factor of $\ld^{-3}$.
By a change of variables
and \eqref{noiseS} with $(a_1, a_2) = (2, 0)$, we have 
\begin{align}
\begin{split}
\ld^{-3}\phi \xi\big(\tfrac{t}{\ld^2}, \tfrac{x}{\ld}\big) 
 &= \ld^{-3 } \int_{\R^3} k\big(\tfrac{x}{\ld}, y  \big)
  \xi \big(\tfrac{t}{\ld^2}, y \big)  dy\\
 &= \ld^{-2 } \int_{\R^3} k\big(\tfrac{x}{\ld}, y  \big) 
\xi^\ld (t, y) dy .
\end{split}
 \label{HS3}
\end{align}

\noi
This motivates us to define the scaled kernel $k^\ld$ by 
\begin{align}
 k^{\ld}(x, y )  = 
 \ld^{-2 }
k(\ld^{-1}x, y )
\label{HS4}
\end{align}

\noi
and the associated 
Hilbert-Schmidt operator $\phi^\ld$
with an integral kernel $k^\ld$.
Then, it follows from \eqref{HS3} and \eqref{HS4} that 
\begin{align}
\ld^{-3}\phi \xi\big(\tfrac{t}{\ld^2}, \tfrac{x}{\ld}\big) 
= \phi^\ld \xi^\ld(t, x).
 \label{HS5}
\end{align}

\noi
Therefore, by applying the scaling \eqref{scaling} with \eqref{HS5}
to SNLS \eqref{SNLS0}
and then applying the $I$-operator,  we obtain
\begin{align}
i \dt I_N u^\ld  + \Dl I_N u^\ld =  I_N (|u^\ld|^2 u^\ld) +  I_N \phi^\ld \xi^\ld.
\label{SNLSI2}
\end{align}

\noi
In the following lemma,  we record the scaling property of the 
Hilbert-Schmidt norm of $I_N \phi^\ld$.

\begin{lemma}\label{LEM:scaling}
Let $d = 3$, $ 0<s<1 $, and  $ \phi \in \HS(L^2,\dot{H}^s)$.
 Then, we have
\begin{align}\label{sc-hs}
\|I_N \phi^\ld\|_{\HS(L^2;\dot{H}^1)}\les N^{1-s}\ld^{-\frac 12 - s} \|\phi\|_{\HS(L^2;\dot{H}^s)}.
\end{align}

\noi
As a consequence, 
given any $\eps > 0$, there exists $\ta > 0$  such that
\begin{align}\label{scale-sto-co}
\Big\| \| \nb I_N \Psi^\ld \|_{{X^{0,\frac{1}{2}-\eps}_T}}\Big\|_{L^{p}(\O)} 
\le C_p \jb{T}^\ta  N^{1-s}\ld^{-\frac 12 - s}\|\phi\|_{\HS(L^2;\dot{H}^s)}
\end{align}

\noi
 for any finite $ p\geq 1 $
 and $T > 0$, 
 where
 $\Psi^\ld$ is  the stochastic convolution corresponding to the
scaled noise $\phi^\ld \xi^\ld$.

Furthermore, if we assume
 $ \phi \in \HS(L^2,\dot{H}^\frac{3}{4})$, 
 then we have
\begin{align}
\|I_N \phi^\ld\|_{\HS(L^2;\dot{H}^\frac{3}{4})}\les \ld^{-\frac  54} 
\|\phi\|_{\HS(L^2;\dot{H}^\frac{3}{4})}, 
\label{sc-hs2}
\end{align}

\noi
uniformly in $N \geq 1$.

\end{lemma}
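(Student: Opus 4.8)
The plan is to prove the three estimates \eqref{sc-hs}, \eqref{scale-sto-co}, and \eqref{sc-hs2} by reducing everything to the kernel representation \eqref{HS4} and the Hilbert-Schmidt norm identity \eqref{HS2a}. First I would establish \eqref{sc-hs}. By \eqref{I2} we have $\|I_N \phi^\ld\|_{\HS(L^2;\dot H^1)} \les N^{1-s}\|\phi^\ld\|_{\HS(L^2;\dot H^s)}$, so it suffices to compute the scaling of the $\HS(L^2;\dot H^s)$-norm of $\phi^\ld$ itself. Using \eqref{HS2a}, this norm equals $\|k^\ld\|_{\dot H^s_x L^2_y}$, and I would plug in the scaled kernel $k^\ld(x,y) = \ld^{-2} k(\ld^{-1}x, y)$ from \eqref{HS4}. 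The homogeneous Sobolev norm $\dot H^s$ scales cleanly under the spatial dilation $x \mapsto \ld^{-1}x$: a change of variables together with the homogeneity of $\||\nb|^s \cdot\|_{L^2}$ in $d=3$ produces a power of $\ld$. Specifically, for $g(x) = h(\ld^{-1}x)$ one has $\|g\|_{\dot H^s} = \ld^{\frac d2 - s}\|h\|_{\dot H^s} = \ld^{\frac 32 - s}\|h\|_{\dot H^s}$; combined with the prefactor $\ld^{-2}$ this yields $\ld^{-2} \cdot \ld^{\frac 32 - s} = \ld^{-\frac 12 - s}$, giving exactly the claimed power.

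Next I would derive \eqref{scale-sto-co} as a direct consequence of \eqref{sc-hs} together with the $X^{s,b}$-regularity of the stochastic convolution from Lemma~\ref{LEM:stoconv}\,(iii). Since $\Psi^\ld$ is the stochastic convolution associated with the Hilbert-Schmidt operator $I_N\phi^\ld$, applying \eqref{Xsb2} with spatial regularity index $1$ (i.e.\ bounding $\|\nb I_N \Psi^\ld\|_{X^{0,\frac12-\eps}_T} = \|I_N\Psi^\ld\|_{X^{1,\frac12-\eps}_T}$) gives
\begin{align*}
\Big\| \|\nb I_N \Psi^\ld\|_{X^{0,\frac12-\eps}_T}\Big\|_{L^p(\O)}
\les p^{\frac12}\jb{T}^\ta \|I_N\phi^\ld\|_{\HS(L^2;\dot H^1)},
\end{align*}
and then inserting \eqref{sc-hs} closes the estimate with $C_p \sim p^{\frac12}$. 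Here I should make sure the exponent $b = \frac12 - \eps < \frac12$ is admissible in Lemma~\ref{LEM:stoconv}\,(iii), which it is.

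Finally, \eqref{sc-hs2} is the special case needed at the regularity $s = \frac34$, and I would prove it by the same kernel computation but keeping the target space at $\dot H^{3/4}$ rather than $\dot H^1$. Applying \eqref{HS2a} with $s = \frac34$ and the dilation scaling $\|h(\ld^{-1}\cdot)\|_{\dot H^{3/4}} = \ld^{\frac32 - \frac34}\|h\|_{\dot H^{3/4}} = \ld^{\frac34}\|h\|_{\dot H^{3/4}}$ together with the $\ld^{-2}$ prefactor gives $\ld^{-2} \cdot \ld^{\frac34} = \ld^{-\frac54}$, which is the stated power; the uniformity in $N$ follows because $\|I_N g\|_{\dot H^{3/4}} \le \|g\|_{\dot H^{3/4}}$ since the multiplier $m_N$ satisfies $0 \le m_N \le 1$ and $\dot H^{3/4}$ is below $\dot H^1$, so $I_N$ is a contraction on $\dot H^{3/4}$ uniformly in $N$.

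I do not expect a genuine obstacle here, since the argument is essentially dimensional analysis on the kernel. The one point requiring care is the bookkeeping of the scaling exponent: I must correctly track the factor $\ld^{-2}$ from \eqref{HS4} (which itself encodes the combination of the $\ld^{-1}$ scaling of $u$ and the white-noise scaling factor $\ld^{-(a_1+da_2)/2}$ with $(a_1,a_2)=(2,0)$, $d=3$) against the homogeneity exponent $\frac d2 - s$ of the $\dot H^s$-norm under spatial dilation, being careful that only the $x$-variable is scaled while the $y$-variable is untouched in \eqref{HS4}. Verifying that the $y$-integration contributes no $\ld$-power, and that the $\dot H^s_x L^2_y$ mixed norm lets the spatial dilation act solely on the $x$-slot, is the main bit of rigor to get right.
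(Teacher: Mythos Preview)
Your proposal is correct and follows essentially the same route as the paper: the paper also reduces \eqref{sc-hs} to the kernel identity \eqref{HS2a}, applies \eqref{I2} and the scaling \eqref{HS4} to obtain the factor $N^{1-s}\ld^{-\frac12-s}$, then deduces \eqref{scale-sto-co} from Lemma~\ref{LEM:stoconv}\,(iii) and \eqref{sc-hs}, and proves \eqref{sc-hs2} by the same computation using only $|m_N|\le 1$. The only cosmetic difference is that the paper applies $I_N$ before passing to the $\dot H^s_x L^2_y$-norm (i.e.\ writes $\|I_N k^\ld\|_{\dot H^1_x L^2_y}$ and then uses \eqref{I2} on the kernel), whereas you first bound $\|I_N\phi^\ld\|_{\HS(L^2;\dot H^1)}\les N^{1-s}\|\phi^\ld\|_{\HS(L^2;\dot H^s)}$ and then scale; these are the same computation in different order.
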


\begin{proof}
From 
\eqref{HS2a}, 
\eqref{I2}, and 
\eqref{HS4}, we have
\begin{align}
\begin{split}
\|I_N \phi^\ld\|_{\HS(L^2;\dot{H}^1)} 
& =  \|I_N k^\ld\|_{\dot{H}^1_xL^2_y}
\les N^{1-s}
\|\ld^{-2}
k(\ld^{-1} x,y )\|_{\dot{H}^s_xL^2_y}\\
& =  N^{1-s}\ld^{-\frac 12 - s} \|\phi\|_{\HS(L^2;\dot{H}^s)}.
\end{split}
\label{sc-hs3}
\end{align}

\noi
The second estimate \eqref{scale-sto-co}
follows from 
 Lemma \ref{LEM:stoconv} and \eqref{sc-hs}.
The last claim \eqref{sc-hs2} follows 
from proceeding as in \eqref{sc-hs3}
but using the uniform bound $|m_N(\xi)| \leq 1$ in the second step.
 \end{proof}

\begin{remark}\rm
(i) 
It is easy to check that Lemma \ref{LEM:scaling}
remains true even if we proceed  
with a scaling argument in \eqref{HS3} for any $a_2 \in \R$.

\smallskip

\noi
(ii) Lemma \ref{LEM:scaling} states that by choosing $\ld = \ld(N) \gg 1$, 
we can make the Hilbert-Schmidt norm of $I_N \phi^\ld$
arbitrarily small
(even after multiplying by $\ld T^\frac{1}{2}$; see \eqref{Y6} 
and \eqref{Y6a} below).

\end{remark}

We conclude this section by going over the scaling of the modified energy.
Let  $u_0^\ld = u^\ld(0)$.
Then, 
from \eqref{I2}, 
 the H\"ormander-Mikhlin multiplier theorem~\cite{G.L}, 
 \eqref{scaling}, and 
  Sobolev's inequality, 
we have 
\begin{align}
\begin{split}
E(I_Nu_{0}^{\ld})&=\frac{1}{2}\|\nb I_N u^\ld_0\|^2_{L^2}+\frac{1}{4}\|I_N u_0^\ld \|_{L^4}^4 \\
&\les N^{2-2s}\|u_0^\ld\|^2_{\dot{H}^s}+\ld^{-1} \|u_0\|^4_{L^4} \\
&\les N^{2-2s}\ld^{1-2s}\|u_0\|^2_{\dot{H}^s}+\ld^{-1}\|u_0\|^4_{H^s} \\
&\leq C_1N^{2-2s}\ld^{1-2s}\big(1+\|u_0\|_{H^s}\big)^4.
\end{split}
\label{scalENg}
\end{align}

\noi
Hence, for $\frac 12 < s < 1$, 
by choosing $\ld = \ld(N) \gg 1$, 
we can make the modified energy $E(I_Nu_0^\ld)$
of the scaled initial data 
arbitrarily small.

\section{On the commutator estimates}
\label{SEC:com}

In this section, we go over the commutator estimates
(Proposition \ref{PROP:CommBd}), 
corresponding to the deterministic component
in our application of the $I$-method. 

\begin{proposition}\label{PROP:CommBd}
Let $ \frac{5}{6}<s<1 $.
Then, given $\be > 0$, there exists small $\eps > 0$ such that 
\begin{align}
\bigg|\int_J\int_{\R^3}\cj{\Dl I_N u} [I_N, \NN](u)dxdt\bigg|
&\les  N^{-1+\be}
\|\nb I_N u\|_{X^{0,\frac{1}{2}-\eps}(J)}^4, 
\label{2D1}\\
\bigg|\int_J\int_{\R^3}\cj{I_N \NN(u)} [I_N, \NN](u) dxdt\bigg|&\les N^{-1+\be}
\|\nb I_N u\|_{X^{0,\frac{1}{2}-\eps}(J)}^6
\label{2D2}
\end{align}

\noi
for any $N\geq 1$ and any interval $J \subset [0, \infty)$,
where 
 $\NN(u) = |u|^2 u$ and 
the implicit constants are independent of $N \geq 1$
and $J \subset [0, \infty)$.
	
\end{proposition}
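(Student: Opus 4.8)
The plan is to adapt the multilinear analysis of \cite[Proposition 4.1]{CKSTT0} to our weaker temporal regularity $b = \frac12 - \eps < \frac12$, paying the usual price of a slight loss of spatial regularity via interpolation, exactly as in Lemma~\ref{LEM:Trilin}. First I would pass to the Fourier side and write the commutator $[I_N, \NN](u)$ as a sum over frequency interactions $\xi_1, \xi_2, \xi_3$ with output frequency $\xi = \xi_1 - \xi_2 + \xi_3$ (for \eqref{2D1}) or over five frequencies for \eqref{2D2}, dyadically decomposing each input. The key algebraic observation is that the symbol of the commutator,
\[
1 - \frac{m_N(\xi)}{m_N(\xi_1)\,m_N(\xi_2)\,m_N(\xi_3)},
\]
vanishes when all frequencies are $\lesssim N$, so the estimate is trivially zero unless at least one frequency is $\gtrsim N$; this is what produces the decay $N^{-1+\be}$. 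By symmetry I may assume the frequencies are ordered $|\xi_1^*| \geq |\xi_2^*| \geq \cdots$, so that $|\xi_1^*| \gtrsim N$.

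Next I would extract the gain from the commutator symbol. Following the standard $I$-method mean-value/pointwise bounds on $m_N$ (the monotonicity and the fractional-integration behavior on $\{|\xi| \geq 2N\}$ encoded in \eqref{Ix1}), one controls the multiplier by a factor proportional to the ratio of the second-largest frequency to the largest, which supplies the negative power of $N$. The remaining multilinear expression is then estimated by Hölder's inequality in space-time, putting each factor (with $\nb I_N$ attached, or the corresponding smoothing absorbed) into an $L^p_{t,x}$ space and applying the interpolated Strichartz bound of Lemma~\ref{LEM:lin}\,(iv), namely $\|v\|_{L^p_{t,x}} \les \|v\|_{X^{0,\frac12 - \eps}}$ for $2 \le p < \frac{10}{3}$, together with the high/low decomposition \eqref{HL} and the auxiliary bounds \eqref{Trilin3a}--\eqref{Trilin4} when a factor carries high frequency. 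Bookkeeping the exponents so that they sum correctly and the derivative/frequency gains balance against the loss from $b < \frac12$ is what forces $s > \frac56$; this is the same threshold as in \cite{CKSTT0}, and I expect that the extra $\eps$-loss in the $X^{0,b}$ regularity can be absorbed into the $\be > 0$ slack. For \eqref{2D2}, which is a sextilinear estimate, I would treat $I_N\NN(u) = I_N(|u|^2 u)$ first by the trilinear Lemma~\ref{LEM:Trilin} (to reduce to an $\dot H^1$-type factor) and then run the same commutator analysis on the remaining three inputs.

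The main obstacle I anticipate is the loss coming from $b = \frac12 - \eps$: in \cite{CKSTT0} the critical case interactions are handled with endpoint or near-endpoint Strichartz estimates at $b$ slightly above $\frac12$, and here we must instead route everything through the strictly subcritical exponents $p < \frac{10}{3}$ of Lemma~\ref{LEM:lin}\,(iv). This means the worst frequency configuration — where the two largest frequencies are comparable and both large, so that the commutator gain is weakest — must be shown to still close with a genuine negative power of $N$. The delicate point is to verify that the slight interpolation loss does not push the required regularity above $\frac56$, which is precisely why the statement is phrased with an arbitrary $\be > 0$ and a correspondingly small $\eps = \eps(\be)$: I would choose $\eps$ last, after fixing the frequency-counting, so that the final budget of derivatives and the $N^{-1+\be}$ decay both come out with room to spare.
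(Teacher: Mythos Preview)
Your overall strategy --- adapt the multilinear analysis of \cite{CKSTT0} by interpolation so that all Strichartz inputs land at $b=\tfrac12-\eps$ --- is exactly what the paper does. But the specific toolkit you cite is not quite the one that closes the argument. Lemma~\ref{LEM:lin}\,(iv) only gives $L^p_{t,x}$ control for $p<\tfrac{10}{3}$, and the bounds \eqref{Trilin3a}--\eqref{Trilin4} give $L^6_{t,x}$ and $L^{5+}_{t,x}$; the paper instead establishes a dedicated auxiliary lemma (Lemma~\ref{LEM:Supportii}) providing the frequency-localized estimates $\|u\|_{L^{10}_tL^{10\pm}_x}\lesssim M^{\pm\theta}\|\nabla u\|_{X^{0,\frac12-}}$ and $\|u\|_{L^{10/3}_tL^{10/3\pm}_x}\lesssim M^{0,\theta}\|u\|_{X^{0,\frac12-}}$, together with a trilinear $L^2_{t,x}$ bound $\|I_N(u_1u_2u_3)\|_{L^2_{t,x}}\lesssim\prod_j\|\nabla I_Nu_j\|_{X^{0,\frac12-}}$ valid for $s>\tfrac23$. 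These are the $b<\tfrac12$ analogues of (4.19)--(4.21) and Lemma~4.3 in \cite{CKSTT0}, proved by interpolating the $b=\tfrac12+$ bounds against cruder $b<\tfrac12$ ones, and they let the paper follow the H\"older pattern of \cite{CKSTT0} essentially verbatim (the $s>\tfrac56$ threshold then comes out of \eqref{2D1}, exactly as before).

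Your plan for \eqref{2D2} has a genuine gap. Lemma~\ref{LEM:Trilin} places $\nabla I_N\NN(u)$ in $X^{0,-\frac12+2\eps}$, i.e.\ a space with \emph{negative} temporal regularity, so $I_N\NN(u)$ cannot be treated as an ``$\dot H^1$-type factor'' to be fed back into Strichartz norms; using it by duality would require bounding the commutator in $X^{-1,\frac12-2\eps}$, which is a different estimate from \eqref{2D1} and which you have not supplied. The paper avoids this by instead placing the block $I_N\NN(u)$ directly in $L^2_{t,x}$ via the trilinear $L^2$ bound of Lemma~\ref{LEM:Supportii}\,(ii), then running a four-factor H\"older ($L^2_{t,x}\times L^{10/3}_tL^{10/3\pm}_x\times L^{10}_tL^{10\mp}_x\times L^{10}_tL^{10\mp}_x$) against the symbol bound $M_N(\bar\xi)/N_2^{1-\eps}\lesssim N^{-1+2\eps}N_2^{-\eps}$, which needs only $s>\tfrac23$.
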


The estimates \eqref{2D1} and \eqref{2D2}
are essentially the same as   those appearing
in the proof of Proposition~4.1 in \cite{CKSTT0}.
The difference appears in the temporal regularity; 
on the right-hand sides of \eqref{2D1} and \eqref{2D2}, 
we have
$b = \frac 12 - \eps$, 
whereas the temporal regularity in \cite{CKSTT0} was $b = \frac 12 + \eps$.
The desired estimates in Proposition~\ref{PROP:CommBd}
follow
from the corresponding estimates in \cite{CKSTT0}
and an interpolation argument.

\begin{lemma}\label{LEM:Supportii}
\textup{(i)}
Let $u$ be a function on $\R \times \R^3$
with the spatial frequency support in  $\{|\xi|\sim M\}$ for some dyadic $M\geq 1$.
 Then, there exists $ \ta > 0$ such that 
\begin{align}
\|u\|_{L^{10}_tL^{10\pm}_x(J\times \R^3)}&\les  M^{\pm \theta}\|\nb u\|_{X^{0,\frac{1}{2}-}(J)},\label{S1}\\
\|u\|_{L^{\frac{10}{3}}_tL^{\frac{10}{3}-}_x(J\times \R^3)}&\les \|u\|_{X^{0,\frac{1}{2}-}(J)},\label{S2}\\
\|u\|_{L^{\frac{10}{3}}_tL^{\frac{10}{3}+}_x(J\times \R^3)}&\les  M^{\theta}\|u\|_{X^{0,\frac{1}{2}-}(J)}, \label{S3}
\end{align}

\noi
for any interval $J \subset [0, \infty)$,
where the implicit constants are independent of $N \geq 1$
and $J \subset [0, \infty)$.

\smallskip

\noi
\textup{(ii)} 
Let $ \frac{2}{3}<s<1 $. Then, the  following trilinear estimate holds:
\begin{align}
\|I_N (u_1 u_2 u_3)\|_{L^2_{t,x}(J\times\R^3)}
\lesssim \prod_{j=1}^{3}\|\nb I_N u_j\|_{X^{0,\frac{1}{2}-}(J)}
\label{S3a}
\end{align}

\noi
for any $N\geq 1$ and any interval $J \subset [0, \infty)$,
where the implicit constants are independent of $N \geq 1$
and $J \subset [0, \infty)$.

\end{lemma}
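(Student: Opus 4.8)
I will treat Part~(i) and Part~(ii) in turn. Throughout, independence of the implicit constants from the interval $J$ is automatic: by the extension definition of $X^{0,b}(J)$ it suffices to bound the corresponding global space-time norms on $\R\times\R^3$, and the tools below --- the transference principle (Lemma~\ref{LEM:lin}\,(iii)), Sobolev embedding, and Bernstein's inequality for the frequency-localized $u$ --- have constants depending only on the exponents. For Part~(i), I would prove \eqref{S2} by interpolating the transference bound $\|u\|_{L^{q_0}_tL^{r_0}_x}\les\|u\|_{X^{0,\frac12+}}$ at the Strichartz-admissible pair $(q_0,r_0)=(\tfrac{10}{3}+,\tfrac{10}{3}-)$ with the identity $\|u\|_{L^2_{t,x}}=\|u\|_{X^{0,0}}$, exactly as in the derivation of \eqref{Str2}, choosing the interpolation parameter so that the time exponent returns to $\tfrac{10}{3}$ while the temporal regularity drops to $b=\tfrac12-$. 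Estimate \eqref{S3} then follows from \eqref{S2} together with Bernstein's inequality, which raises the spatial exponent from $\tfrac{10}{3}-$ to $\tfrac{10}{3}+$ at a cost of $M^{\theta}$. For \eqref{S1}, I would use transference at a pair near the admissible $(10,\tfrac{30}{13})$, upgrade the spatial integrability to $L^{10}_x$ via the Sobolev embedding $\dot W^{1,30/13}\hookrightarrow L^{10}$ (which accounts for the gradient $\nb$ on the right-hand side), interpolate with $L^2_{t,x}$ to reach $b=\tfrac12-$, and finally apply Bernstein to move the spatial exponent to $10\pm$, producing the factor $M^{\pm\theta}$.

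For Part~(ii), I would first reduce to $N=1$ by the interpolation lemma (\cite[Lemma 12.1]{CKSTT2}), as in the proof of Lemma~\ref{LEM:Trilin}; at $N=1$ one has $\nb I\sim|\nb|^{s}$ on high frequencies. After a Littlewood--Paley decomposition $u_j=\sum_{M_j}\P_{M_j}u_j$ and an ordering $M_1\ge M_2\ge M_3$ by symmetry, the task is to bound each dyadic piece $\|I(\P_{M_1}u_1\,\P_{M_2}u_2\,\P_{M_3}u_3)\|_{L^2_{t,x}}$ and sum over the frequencies. The basic tool is the $L^6_{t,x}$-H\"older inequality $\|v_1v_2v_3\|_{L^2_{t,x}}\le\prod_j\|v_j\|_{L^6_{t,x}}$ combined with the frequency-localized bound $\|\P_Mv\|_{L^6_{t,x}}\les M^{\frac23+}\|\P_Mv\|_{X^{0,\frac12-}}$, which I would obtain by interpolating \eqref{S1} and \eqref{S2}, since $L^6_{t,x}$ is the $\tfrac23$-interpolant between $L^{10}_{t,x}$ and $L^{10/3}_{t,x}$.

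The decisive case --- and the main obstacle --- is the high-high-high interaction $M_1\sim M_2\sim M_3=:M$ producing a \emph{low} output frequency, where $I$ supplies no smoothing ($m_N\sim1$ there). In that configuration the three $L^6_{t,x}$ factors cost $M^{2+}$, whereas the right-hand side carries the gain $\prod_j\|\nb I\P_Mu_j\|_{X^{0,\frac12-}}\sim M^{3s}\prod_j\|\P_Mu_j\|_{X^{0,\frac12-}}$; matching $M^{2+}\les M^{3s}$ forces precisely $s>\tfrac23$, and the strict inequality simultaneously supplies the geometric decay needed to sum the dyadic series. All remaining configurations (high-low-low, and the high-high-low or high-high-high cases with comparable high output, where $I$ contributes an extra factor $M^{s-1}$) are strictly better and require only $s>\tfrac12$, so they are handled and summed analogously.
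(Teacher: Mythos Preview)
Your plan is sound and would go through with minor care, but it diverges from the paper's argument in two places worth flagging.

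For Part~(i), the paper does not interpolate against $L^2_{t,x}$. Instead it keeps the temporal exponent fixed throughout: for \eqref{S2} it interpolates $\|u\|_{L^{10/3}_{t,x}}\les\|u\|_{X^{0,\frac12+}}$ with $\|u\|_{L^{10/3}_tL^2_x}\les\|u\|_{X^{0,\frac15}}$, and for \eqref{S1} it interpolates $\|u\|_{L^{10}_{t,x}}\les\|\nb u\|_{X^{0,\frac12+}}$ with the $L^{10}_t$-endpoints $\|u\|_{L^{10}_tL^{10+}_x}\les M^{\frac15+}\|\nb u\|_{X^{0,\frac25}}$ and $\|u\|_{L^{10}_tL^2_x}\les M^{-1}\|\nb u\|_{X^{0,\frac25}}$. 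Your scheme --- interpolating the $b=\tfrac12+$ bound against $L^2_{t,x}$ --- necessarily moves the time exponent to $10-$ (respectively $\tfrac{10}{3}-$) rather than hitting $10$ (respectively $\tfrac{10}{3}$) exactly; this is harmless for the H\"older applications downstream but does not literally deliver \eqref{S1} as stated. The paper's choice of same-$L^q_t$ endpoints is precisely what makes the target mixed norms come out on the nose.

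For Part~(ii), both arguments reduce to $N=1$ via the interpolation lemma and ultimately use $L^6_{t,x}\times L^6_{t,x}\times L^6_{t,x}$ H\"older, but the paper is considerably more direct: it simply bounds $|m_1(\xi)|\le 1$ to discard the outer $I$, splits each $u_j=u_j^{\hi}+u_j^{\lo}$ once, and proves the single-function estimate $\|u_j^{\bullet}\|_{L^6_{t,x}}\les\|\nb I u_j\|_{X^{0,\frac12-}}$ for each piece (Sobolev for the low piece; a short interpolation needing $s>\tfrac23$ for the high piece). No full Littlewood--Paley expansion, no output-frequency case split, no dyadic summation. Your frequency-interaction analysis is correct and pinpoints the same threshold $s>\tfrac23$ from the same high-high-high balance, but it is more work than the problem requires once one realizes the outer $I$ can simply be dropped.
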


Note that $\frac 23 < \frac 56$.

\begin{proof}
Part (i) follows from (4.19), (4.20), and (4.21) in \cite{CKSTT0}, 
showing the corresponding estimates with $b = \frac 12 + \eps$
on the right-hand sides, 
and a simple interpolation argument.
From Sobolev's inequality and Lemma \ref{LEM:lin}\,(iii), we have 
\begin{align}
\| u \|_{L^{10}_t L^{10}_x(J \times \R^3)} 
\les \| \nb u \|_{L^{10}_t L^\frac{30}{13}_x(J \times \R^3)} 
\les \|\nb u \|_{X^{0, \frac 12 +}}.
\label{S3b}
\end{align}

\noi
Then, 
\eqref{S1} with the $+$ sign
follows from interpolating \eqref{S3b}
and 
\[ \| u \|_{L^{10}_t L^{10+}_x(J \times \R^3)} 
\les M^{\frac{1}{5}+} \|\nb u \|_{X^{0, \frac 25}}.\]

\noi
As for \eqref{S1} with the $-$ sign, 
interpolate \eqref{S3b}
with 
\[ \| u \|_{L^{10}_t L^{2}_x(J \times \R^3)} 
\les  M^{-1} \| \nb u \|_{X^{0, \frac 25}}.\]

\noi
As for \eqref{S2}, we interpolate \eqref{Str3}
with 
\[ \| u \|_{L^{\frac{10}{3}}_t L^{2}_x(J \times \R^3)} 
\les  \| u \|_{X^{0, \frac 15}},\]

\noi
while \eqref{S3} follows from Sobolev's inequality and \eqref{S2}.

Part (ii) corresponds to Lemma 4.3 in \cite{CKSTT0} but with $b = \frac 12-$.
By the interpolation lemma \cite[Lemma 12.1]{CKSTT2}, 
we may assume $N = 1$.
As in \eqref{HL}, write $u_j=u_j^\hi+u_j^{\lo}$.
As for $u_j^\lo$, we have
\begin{align}
\|u^\lo_j \|_{L^6_{t, x}}
\les \|\nb I u^\lo_j \|_{L^6_{t} L^2_x}
\les  \|\nb I u^\lo_j \|_{X^{0, \frac{1}{3}}}.
\label{S4}
\end{align}

\noi
As for $u_j^\hi$, noting $I\sim |\nb |^{s-1}$, we have
\begin{align*}
\|u^\hi_j \|_{L^6_{t, x}}
& \sim \big\| |\nb|^{1-s} I u^\hi_j \big\|_{L^6_{t, x}}
\les \big\| |\nb|^{\frac 53-s} I u^\hi_j \big\|_{L^6_{t}L^\frac{18}{7}_x}\\
& \les \big\| \jb{\nb}^{1-} I u^\hi_j \big\|_{X^{0, \frac{1}{2}+}},  
\end{align*}

\noi
\noi
provided that $s > \frac 23$, 
where we used Lemma \ref{LEM:lin}\,(iii) in the last step.
Interpolating this with 
\begin{align*}
\|u^\hi_j \|_{L^6_{t, x}}
& \sim \big\| |\nb|^{1-s} I u^\hi_j \big\|_{L^6_{t, x}}
\les \big\| |\nb|^{2-s} I u^\hi_j \big\|_{L^6_{t}L^2_x}\\
& \les \big\| \jb{\nb}^{2-s} I u^\hi_j \big\|_{X^{0, \frac{1}{3}}},  
\end{align*}

\noi
we obtain 
\begin{align}
\|u^\hi_j \|_{L^6_{t, x}}
& \les \big\| \jb{\nb} I u^\hi_j \big\|_{X^{0, \frac{1}{2}-}}
\sim \|\nb I u^\hi_j \|_{X^{0, \frac{1}{2}-}}.
\label{S5}
\end{align}

\noi
Then, \eqref{S3a} follows from 
the boundedness of $m_1(\xi)$
and 
$L^6_{t, x}, L^6_{t, x}, L^6_{t, x}$-H\"older's inequality with~\eqref{S4} and~\eqref{S5}.
\end{proof}

We now briefly discuss a proof of  Proposition~\ref{PROP:CommBd}.

\begin{proof}[Proof of Proposition~\ref{PROP:CommBd}]

The estimates \eqref{2D1} and \eqref{2D2} (with $b = \frac 12-$) follow
from a small modification of the proof of Proposition 4.1 in \cite{CKSTT0} (with $b = \frac 12+$), 
using 
 Lemma~\ref{LEM:Supportii}.
In the following, we present 
the proof of  the second estimate \eqref{2D2}.
As for the first estimate~\eqref{2D1}, we
briefly discuss the required modifications 
at the end of the proof.
%
%
%
%
In the following, we fix $N$ and drop the subscript $N$ from $I_N$ and $m_N$.

From the definition \eqref{Ix2} of the $I$-operator, we can rewrite the left-hand side of \eqref{2D2}
as 
\begin{align}
\bigg| \int_J \intt_{\xi_1+\xi_2+ \xi_3+\xi_4= 0}
M_N(\bar \xi)  \ft{\cj{I \NN(u)} } (\xi_{1}, t) 
\ft{Iu}(\xi_2, t)\ft{\cj{Iu}}(\xi_3, t)\ft{Iu}(\xi_4, t)
d\xi_1d\xi_2d\xi_3 dt \bigg|, 
\label{Z1}
\end{align}

\noi
where the multiplier $M_N (\bar \xi)$ is given by 
\begin{align}
M_N(\bar \xi) = 
M_N(\xi_1, \xi_2, \xi_3, \xi_4)
= 
\frac{m(\xi_{1})}{m(\xi_2)m(\xi_3)m(\xi_4)}
- 1.
\label{Z2}
\end{align}

\noi
We suppress the $t$-dependence in the following.

\noi

With the Littlewood-Paley projector $\P_{N_j}$ in \eqref{Px1}, 
we have 
\begin{align}
\eqref{Z1} \le
\sum_{\substack{N_1, \dots, N_4 \geq 1\\\text{dyadic}}}\bigg| \int_J 
\intt_{\xi_1+\xi_2+ \xi_3+\xi_4= 0}
M_N(\bar \xi)  \ft{U}_1 (\xi_1) 
\prod_{j = 2}^4 \ft{u}_j(\xi_j) 
d\xi_1d\xi_2d\xi_3 dt \bigg|, 
\label{Z3}
\end{align}

\noi
where  
$U_1=  \cj{ \P_{N_1} I \NN(u)}$, 
$u_j =  \P_{N_j} I u$, $j = 2, 4$, 
and $u_3 = \cj{\P_{N_3}I u}$.
Without loss of generality, we assume that $N_2 \ge N_3 \ge N_4$.
Note that  we  have $N_1 \les N_2$
under $\xi_1+\xi_2+\xi_3+\xi_4 = 0$ and $|\xi_j| \sim N_j$.
Thus,  if we have $N_2 \ll N$ in addition, 
then it follows from \eqref{Ix1} and \eqref{Z2} that  
$M_N(\bar \xi) =  0$.
Therefore, we assume that $N_2 \ges N$ in the following.
We also note that under $N_2 \ges N_1$ and $N_2 \ge N_3 \ge N_4$, 
we have 
\begin{align}
m(N_2) m(N_3) m(N_4) 
\le m(N_2)  \les m(N_1)
\label{Z4}
\end{align}

\noi
Then, from \eqref{Z2} and \eqref{Z4} with \eqref{Ix1}, we have 
\begin{align}
\begin{split}
\frac{M_N(N_1, N_2, N_3, N_4)}{N_2^{1-\eps}} 
& \sim \frac{m(N_1)}{N_2^{1-\eps} \prod_{j = 2}^4 m(N_j)}
 \le 
\frac{1}{N_2^{1-\eps} (m(N_2))^3}\\
& \leq N^{-1+2\eps} N_2^{-\eps} \bigg(\frac{N}{N_2}\bigg)^{1-2\eps - 3(1-s)}\\
& \les N^{-1+2\eps} N_2^{-\eps} 
\end{split}
\label{Z5}
\end{align}

\noi
for any sufficiently small $\eps > 0$, 
provided that $s > \frac {2 + 2\eps}{3}$.

\smallskip

\noi
$\bullet$
{\bf Case 1:}
 $N_j \ges 1$, $j = 1, \dots, 4$.
\\
\indent
In this case,
by  applying 
the $L^2_{t, x},  L^{\frac{10}{3}}_{t} L^{\frac{10}{3}+}_{x}, 
 L^{10}_{t} L^{10-}_{x},  L^{10}_{t} L^{10-}_{x}$-H\"older's inequality\footnote{It is understood that the time integrations
 are restricted to the interval $J$.
 The same comment applies in the following.}
 and \eqref{Z5} to \eqref{Z3}
 and then applying Lemma \ref{LEM:Supportii}, we obtain
\begin{align*}
\eqref{Z1} 
& \les N^{-1+} 
\sum_{\substack{N_1, \dots, N_4 \geq 1\\\text{dyadic}}}
\bigg(\prod_{j = 1}^4 N_j^{-\frac{\eps}{4}}\bigg)
\| U_1 \|_{L^2_{t, x}}
N_2^{1-} \| u_2 \|_{L^{\frac{10}{3}}_{t} L^{\frac{10}{3}+}_{x}}
\prod_{j = 3}^4 \|u_j\|_{ L^{10}_{t} L^{10-}_{x}}\\
& \les N^{-1+} \prod_{j = 1}^6 
 \|\nb I u_j\|_{X^{0,\frac{1}{2}-}(J)}
\end{align*}

\smallskip

\noi
$\bullet$
{\bf Case 2:}
 $N_1 \ges 1 \gg N_4$.
\\
\indent
We proceed as in Case 1
but 
 we place $u_j$, $j = 3, 4$, 
 in 
 the $ L^{10}_{t} L^{10+}_{x}$-norm
 when  $N_j \ll1$.
In view of
Lemma \ref{LEM:Supportii}\,(i), 
this creates 
a small positive power of $N_j$, allowing 
us to sum over dyadic $N_j \ll1$.

For $N_3 \ges 1$, 
we 
apply 
the $L^2_{t, x},  L^{\frac{10}{3}}_{t} L^{\frac{10}{3}+}_{x}, 
 L^{10}_{t} L^{10-}_{x},  L^{10}_{t} L^{10+}_{x}$-H\"older's inequality
 and proceed as in Case 1.
For $N_3 \ll 1$, 
we 
apply 
the $L^2_{t, x},  L^{\frac{10}{3}}_{t} L^{\frac{10}{3}-}_{x}, 
 L^{10}_{t} L^{10+}_{x},  L^{10}_{t} L^{10+}_{x}$-H\"older's inequality
  and proceed as in Case 1.

\smallskip

\noi
$\bullet$
{\bf Case 3:}
 $N_1 \ll 1$.
\\
\indent
In this case, we use 
\begin{align}
\|U_1 \|_{L^2_t L^{2+}_x}
\les N_1^{0+}
\|U_1 \|_{L^2_{t, x}}
\label{Z6}
\end{align}

\noi
to gain a small power of $N_1$.
For $N_4 \ges 1$, 
we 
apply 
the $L^2_{t}L^{2+}_x,  L^{\frac{10}{3}}_{t} L^{\frac{10}{3}+}_{x}, 
 L^{10}_{t} L^{10-}_{x},  L^{10}_{t} L^{10-}_{x}$-H\"older's inequality
 and proceed as in Case 1
 with  Lemma \ref{LEM:Supportii} and \eqref{Z6}.

For $N_3 \ges 1\gg N_4$, 
we 
apply 
the $L^2_{t}L^{2+}_x,  L^{\frac{10}{3}}_{t} L^{\frac{10}{3}+}_{x}, 
 L^{10}_{t} L^{10-}_{x},  L^{10}_{t} L^{10+}_{x}$-H\"older's inequality.
 For $N_3, N_4 \ll1 $, 
we 
apply 
the $L^2_{t}L^{2+}_x,  L^{\frac{10}{3}}_{t} L^{\frac{10}{3}-}_{x}, 
 L^{10}_{t} L^{10+}_{x},  L^{10}_{t} L^{10+}_{x}$-H\"older's inequality.
Then, 
proceeding as in Case 1
 with  Lemma \ref{LEM:Supportii} and \eqref{Z6}, 
 we obtain the desired estimate~\eqref{2D2}.

As for the first estimate \eqref{2D1}, 
we can simply repeat the argument in \cite{CKSTT0}
with Lemma~\ref{LEM:Supportii}\,(i) in place 
of \cite[(4.19), (4.20), and (4.21)]{CKSTT0}
and replacing $L^{\frac{10}{3}}_{t, x}$ by $L^{\frac{10}{3}}_{t}L^{\frac{10}{3}-}_{x}$
(so that \eqref{S2} with $b = \frac 12-$ is applicable).
We point out that the regularity restriction $s > \frac 56$ comes from this part.
\end{proof}

\section{On growth of the modified energy}\label{SEC:energy}

In this section, 
we use stochastic analysis to study  growth of  
the modified energy $E(I_Nu)$ associated with $I$-SNLS 
\eqref{SNLSI}. 
Before doing so, 
 we first go over the  deterministic setting. 
 Given a smooth solution $u$ to the cubic NLS \eqref{NLS}, 
 we can verify the conservation of the 
 energy $ E(u)=\frac{1}{2}\int_{\R^3} |\nb u|^2 dx +\frac{1}{4}\int_{\R^3}|u|^4 dx $ 
  by simply differentiating in time
 and using the equation  \eqref{NLS}:
\begin{align*}
\dt E(u(t)) &= \Re   \int_{\R^3} \cj{ \dt u} (|u|^2u-\Dl u) dx  \\
&=\Re   \int_{\R^3} \cj{ \dt u} (|u|^2u-\Dl u- i\dt u) dx  =0.
\end{align*}

\noi
In a similar manner, 
given a smooth solution $u$ to the cubic NLS \eqref{NLS}, 
the time derivative of the modified energy $E(I_Nu)$
is given by 
\begin{align}\label{Igrowth}
\begin{split}
\dt E(I_N u (t) )
&= \Re   \int_{\R^3} \cj{ \dt I_N u} \big(|I_N u|^2 I_N u - I_N(|u|^2u)\big) dx  \\
&=  - \Re     \int_{\R^3} \cj{\dt I_N u} [I_N, \NN](u)     dx   .
\end{split}
\end{align}

\noi
Then, the fundamental theorem of calculus
yields
\begin{align}
|  E(I_N u(t_2))- E(I_N u(t_1))   |=\bigg|    \Re \int_{t_1}^{t_2} \int_{\R^3} \cj{\dt I_N u} [I_N, \NN](u)dxdt  
 \bigg|
\label{Igrowth2}
\end{align}

\noi
for any $t_1, t_2 \in \R$, 
where the right-hand side can be estimated by the commutator estimate;
see \cite[Proposition 4.1]{CKSTT0}.

For our problem. 
we need to  estimate growth of the modified energy $E(I_N u)$,
where $u$ is now a solution to  SNLS \eqref{SNLS0}
with a stochastic forcing.\footnote{As we see in Section \ref{SEC:end}, 
we in fact apply the argument in this section 
after applying a suitable scaling.} 
As such, 
we need to proceed with Ito's lemma.

\begin{lemma}\label{LEM:Ito}
Given $N \geq 1$, let $I_Nu$ be the solution to $I$-SNLS \eqref{SNLSI}
with $I_N u|_{t = 0} = I_N u_0$, 
where 
$\phi$ and $u_0$ are as in  Proposition \ref{PROP:LWP}.
Moreover, 
given $T >0 $, let 
$\tau$ be a stopping time 
with $ 0<\tau<\min(T^{*}, T) $ almost surely, 
where $ T^* = T^*_\o$ is the (random) forward maximal time of existence
for $I$-SNLS \eqref{SNLSI}, satisfying \eqref{LWP2}.
Then, we have 
\begin{align}
\begin{split}
E(I_N u(\tau))
&=E(I_N u_0)\\
&\phantom{=}
-\Im\int_{0}^{\tau}\int_{\R^3}
\cj{
\Dl I_N u - I_N \NN(u)}
 \, [I_N, \NN](u)dxdt
\\ 
&\phantom{=}-\Im\sum_{n \in \N}\int_{0}^{\tau}\int_{\R^3}
\cj{\Dl I_N u}I_N \phi e_ndxd\b_n(t)
\\
&\phantom{=}+ \Im \sum_{n \in \N}\int_0^{\tau}\int_{\R^3}
\cj{\NN(I_N u)} I_N \phi e_n dxd\b_n(t)
\\ 
&\phantom{=}+2 \sum_{n \in \N} \int_0^{\tau}\int_{\R^3}|I_N u I_N \phi e_n|^2dxdt 
\\ 
&\phantom{=}+ \tau\|I_N \phi\|_{\HSii}^2. 
\end{split}
\label{Ito1}
\end{align}

\noi
Furthermore, 
if we assume that 
$I_N \phi \in \HS(L^2;\dot{H}^\frac{3}{4})$ in addition, then 
there exists $ C>0 $ such that 
\begin{align}
\begin{split}
\E\bigg[ \sup_{0\leq t\leq \tau} E(I_N u(t))  \bigg]
&\leq 2E(I_N u_0)
+CT\|I_N \phi\|^{2}_{\HS(L^2;\dot{H}^1)}
+ C T^2 
\|I_N \phi\|^4_{\HS(L^2;\dot{H}^\frac{3}{4})}
\\
&\hphantom{X}
+C
\E\Bigg[ \sup_{0\leq t\leq \tau}
\bigg|\Im \int_{0}^t\int_{\R^3} 
\cj{
\Dl I_N u - I_N \NN(u)}
\, [I_N, \NN](u)dxdt' \bigg|\Bigg].
\end{split}
\label{EngBc}
\end{align}

\end{lemma}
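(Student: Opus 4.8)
The plan is to prove the identity \eqref{Ito1} by applying Ito's lemma to the functional $E(I_N u)$, and then to derive the expectation bound \eqref{EngBc} by taking suprema, invoking the Burkholder-Davis-Gundy inequality on the martingale (stochastic integral) terms, and controlling the quadratic variation / Ito correction terms using the Hilbert-Schmidt hypotheses.

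\textbf{Step 1: The Ito identity.} First I would write $E(I_N u) = \frac12 \|\nb I_N u\|_{L^2}^2 + \frac14 \|I_N u\|_{L^4}^4$ and regard it as a (real-valued) functional $F(I_N u)$ of the $\dot H^1$-valued process $I_N u$ solving \eqref{SNLSI}. Using the Duhamel/mild form of $I$-SNLS with the cylindrical Wiener process expansion \eqref{Psi2}, $I_N u$ is an Ito process driven by $\{\be_n\}_{n\in\N}$ with diffusion coefficient $I_N\phi e_n$. Applying Ito's lemma to $F$ produces three kinds of contributions: the drift from the deterministic part of the dynamics, a sum of stochastic integrals (the first-order $DF$ paired against $I_N\phi\,d\be_n$), and the second-order Ito correction involving $\tfrac12 D^2F$ evaluated on $\sum_n (I_N\phi e_n)\otimes(I_N\phi e_n)$. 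The drift, after using the equation $i\dt I_N u + \Dl I_N u = I_N\NN(u) + I_N\phi\xi$ (cf.~the deterministic computation \eqref{Igrowth}), collapses via the same cancellation as in the conservation-law computation to the commutator term $\cj{\Dl I_N u - I_N\NN(u)}\,[I_N,\NN](u)$, giving the second line of \eqref{Ito1}. The first-order term splits according to $D F = -\cj{\Dl I_N u} + \cj{\NN(I_N u)}$ (the gradient of the kinetic and quartic parts), yielding the two martingale lines. The second-order term, coming from $D^2$ of the two parts of $F$, produces the $\int |I_N u\, I_N\phi e_n|^2$ term (from the $L^4$-part) and the $\tau\|I_N\phi\|_{\HSii}^2$ term (from the kinetic part, since $D^2$ of $\frac12\|\nb\cdot\|_{L^2}^2$ is the constant bilinear form $\jb{\nb\cdot,\nb\cdot}$ and summing over $n$ gives $\sum_n \|\nb I_N\phi e_n\|_{L^2}^2 = \|I_N\phi\|_{\HSii}^2$). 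Care is needed with the complex structure: since $\be_n$ is complex, the real and imaginary parts contribute separately, which is the source of the $\Im$ projections appearing in \eqref{Ito1}; I would carry this out carefully.

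\textbf{Step 2: The expectation bound.} Starting from \eqref{Ito1} evaluated at a generic time $t \le \tau$, I would take $\sup_{0\le t\le\tau}$ and then expectation. The commutator drift term and the final two (non-martingale) correction terms are moved to the right-hand side directly. The key point is the two stochastic-integral terms: these are martingales (up to the stopping time $\tau$), so by BDG their supremum is controlled by the expectation of the square root of the quadratic variation. For the $\cj{\Dl I_N u}\,I_N\phi e_n$ martingale, BDG and Cauchy-Schwarz give a bound like $\E\big[(\int_0^\tau \|\nb I_N u\|_{L^2}^2 \,\|I_N\phi\|_{\HSii}^2\,dt)^{1/2}\big]$; I would then use Young's inequality to absorb a factor $\tfrac12 \E[\sup_t \|\nb I_N u\|_{L^2}^2] \le \tfrac12\E[\sup_t E(I_N u)]$ into the left-hand side, leaving the $CT\|I_N\phi\|_{\HSii}^2$ term. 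Similarly the $\cj{\NN(I_N u)}\,I_N\phi e_n$ martingale, together with the pointwise correction $\int|I_N u\,I_N\phi e_n|^2$, is estimated by a Gagliardo-Nirenberg / Sobolev argument in terms of $\|I_N\phi\|_{\HS(L^2;\dot H^{3/4})}$ — this is exactly why the extra hypothesis $I_N\phi\in\HS(L^2;\dot H^{3/4})$ is imposed — producing the $CT^2\|I_N\phi\|^4_{\HS(L^2;\dot H^{3/4})}$ term after Young. Absorbing the $\tfrac12\E[\sup E(I_N u)]$ and multiplying through by $2$ yields \eqref{EngBc}.

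\textbf{Main obstacle.} I expect the principal difficulty to be making the application of Ito's lemma rigorous for the $\dot H^1$-valued, infinite-dimensional process together with the quartic (hence unbounded and non-globally-Lipschitz) functional $E$: strictly one should first regularize/truncate (or work with a Galerkin/frequency-truncated approximation where the $L^4$ nonlinearity and the infinite noise sum are tame), apply the finite-dimensional Ito formula there, and then pass to the limit, using the local well-posedness of Proposition \ref{PROP:LWP} and the stopping time $\tau < T^* \wedge T$ to keep all quantities finite. A secondary technical point is the BDG-plus-Sobolev estimate of the quartic martingale term, where the correct interpolation exponent must land precisely on $\dot H^{3/4}$ so that the available regularity of $I_N\phi$ suffices; getting the powers of $T$ and the Young's-inequality splitting right is the crux of obtaining the clean bound \eqref{EngBc}.
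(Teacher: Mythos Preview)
Your proposal is correct and follows essentially the same approach as the paper: apply Ito's lemma (with the rigorous justification deferred to a truncation/approximation argument as in \cite{DD2}) to obtain \eqref{Ito1}, then estimate the two stochastic-integral terms via Burkholder--Davis--Gundy plus Cauchy--Schwarz and Young's inequality, using the Sobolev embedding $\dot H^{3/4}(\R^3)\subset L^4(\R^3)$ for the quartic martingale and the $|I_N u\, I_N\phi e_n|^2$ correction, and absorb the resulting fractions of $\E[\sup_t E(I_N u)]$ into the left-hand side. The only cosmetic difference is that the paper treats the $|I_N u\, I_N\phi e_n|^2$ term separately (rather than bundled with the quartic martingale) and absorbs $\tfrac18$ from each of three estimates rather than a single $\tfrac12$, but the substance is identical.
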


\begin{remark}\label{REM:martin}\rm
(i) 
The second term on the right-hand side of \eqref{Ito1} 
corresponds to the contribution 
from the commutator term $[I_N, \NN]$, also present in the deterministic case.
The remaining terms 
are the additional terms, appearing from the application of Ito's lemma.
Part (ii) follows from \eqref{EngBc}
and  Burkholder-Davis-Gundy  inequality.
We need to hide some terms on the right-hand side of \eqref{Ito1}
to the left-hand side, which results in 
 the factor of 2 appears in the first term 
on the right-hand side of \eqref{EngBc}.

In the deterministic setting, one can 
apply the commutator estimate to \eqref{Igrowth2}
on each local-in-time interval.
In the current stochastic setting, 
however, 
it is not possible to apply the estimate \eqref{EngBc}
(and the commutator estimates (Proposition \ref{PROP:CommBd}))
on each local-in-time interval
since
this factor of 2 on $E(I_Nu_0)$
in \eqref{EngBc} would lead to an exponential growth of the constant
in iterating the local-in-time argument.

\smallskip

\noi
(ii) Our convention states that $\be_n$ in \eqref{Psi1} is a complex-valued Brownian motion.
This is the reason we do not have a factor $\frac 12$ on the last term in \eqref{Ito1}.

\smallskip

\noi
(iii) In controlling 
  the  fourth and fifth terms
on the right-hand side of \eqref{Ito1}, 
we need to use
the $\HS(L^2;\dot{H}^\frac{3}{4})$-norm
of $I_N \phi$ in \eqref{EngBc}.

\end{remark}

\begin{proof}

A formal application of Ito's lemma yields \eqref{Ito1}.
One can justify the computation by inserting 
several truncations
and the local well-posedness argument.  See \cite{DD2} for details
when there is no $I$-operator.

Let us now turn to Part (ii).
By 
Burkholder-Davis-Gundy inequality
(\cite[Theorem 3.28 on p.\,166]{KS}), 
 Cauchy-Schwarz inequality,  and Cauchy's inequality, we 
estimate the third term on the right-hand side of \eqref{Ito1}
as
\begin{equation}
\begin{split}
\E\Bigg[ & \supT
 \bigg(\Im\sum_{n\in\N}\int_{0}^{t}\int_{\R^3}\cj{\Dl I_N u} I_N \phi e_n   dx d\beta_n(t')\bigg)\Bigg]\\
&\le C \, \E\Bigg[\bigg(\sum_{n\in\N}\int_{0}^{\tau}\bigg|\int_{\R^3}\nb I_N u\cdot \nb I_N \phi e_ndx\bigg|^2dt  \bigg)^\frac12\Bigg]\\
&\le C T^\frac12\|I_N \phi \|_{\HSii} \E\bigg[ \supT \|\nb I_N u(t) \|_{L^2}   \bigg]\\
&\le C T 
\|I_N \phi\|^2_{\HS(L^2;\dot{H}^1)}+\frac{1}{8}\E\bigg[\supT E(I_N u(t))\bigg].
\end{split}
\label{P1}
\end{equation}

\noi
By Burkholder-Davis-Gundy inequality
and Sobolev's inequality $\dot H^\frac{3}{4}(\R^3) \subset L^4(\R^3)$, 
the fourth terms is estimated as 
\begin{equation}
\begin{split}
\E\Bigg[ & \supT\bigg(\Im\sum_{n\in\N}\int_{0}^{t}\int_{\R^3}|I_N u|^2\cj{I_N u} I_N \phi e_ndxd\beta_n(t')\bigg)\Bigg]\\
&\le
C\E\Bigg[\bigg(\sum_{n\in\N}\int_{0}^{\tau}\bigg|\int_{\R^3}|I_N u|^2\cj{I_N u} I_N \phi e_ndx\bigg|^2dt\bigg)^{\frac12}\Bigg]\\
&\le C T^\frac12\|I_N \phi \|_{\HS(L^2\dot H^\frac{3}{4})} 
\E\bigg[ \supT \| I_N u(t) \|_{L^4}^3   \bigg]\\
&\le C T^2 
\|I_N \phi\|^4_{\HS(L^2;\dot{H}^\frac{3}{4})}+\frac{1}{8}\E\bigg[\supT E(I_N u(t))\bigg].
\end{split}
\label{P2}
\end{equation}

\noi
By Sobolev's inequality, 
we estimate the fifth term as 
\begin{align}
\begin{split}
2 \E\bigg[ & \sum_{n\in\N}\int_0^\tau \int_{\R^3}|I_N u I_N \phi e_n|^2dxdt\bigg]\\
& \le C T
\|I_N \phi \|_{\HS(L^2\dot H^\frac{3}{4})}^2 
\E\bigg[ \supT \| I_N u(t) \|_{L^4}^2   \bigg]\\
&\le C T^2 
\|I_N \phi\|^4_{\HS(L^2;\dot{H}^\frac{3}{4})}+\frac{1}{8}\E\bigg[\supT E(I_N u(t))\bigg].
\end{split}
\label{P3}
\end{align}

Finally, 
the desired estimate \eqref{EngBc} follows from 
\eqref{Ito1}, 
\eqref{P1}, \eqref{P2}, and \eqref{P3}.
\end{proof}

\section{Proof of Theorem \ref{THM:main}}
\label{SEC:end}

In this section, 
we present global well-posedness 
of SNLS \eqref{SNLS0} (Theorem \ref{THM:main}). 
In the current stochastic setting, 
it suffices
to prove the following
``almost'' almost sure global well-posedness.

\begin{proposition}\label{PROP:aas}
Given  $\frac56< s < 1$, 
let  $u_0 \in H^s(\R^3)$ and  $\phi \in \HS(L^2; H^s)$.
Then, given any $T, \eps > 0$, there exists a set $ \O_{T, \eps}\subset \O$
such that

\smallskip
\noi
\textup{(i)}
$P( \O_{T, \eps}^c) < \eps$.

\smallskip
\noi
\textup{(ii)}
For each $\o \in  \O_{T, \eps}$, there exists a \textup{(}unique\textup{)} solution $u$
to SNLS \eqref{SNLS0} in $C([0,T];H^s(\R^3)) $
with $u|_{t = 0} = u_0$
and the noise given by $\phi \xi = \phi \xi(\o)$.

\end{proposition}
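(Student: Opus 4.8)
The plan is to establish global well-posedness by controlling the $\dot H^s$-norm of the solution up to the target time $T$, combining the scaling reduction, the single global application of Ito's lemma from Lemma~\ref{LEM:Ito}, and an iterative control of the commutator contribution via the stopping-time mechanism. By Remark~\ref{REM:L2bound}, the $L^2$-norm is already controlled almost surely on $[0, T]$, so it suffices to bound $\|u(t)\|_{\dot H^s}$, and by the bound \eqref{I2} this is reduced to controlling the modified energy $E(I_N u^\ld)$ of a scaled solution. First I would fix $T, \eps > 0$ and choose the scaling parameter $\ld = \ld(N) \gg 1$ and the $I$-operator parameter $N = N(T) \gg 1$, with the dependence between them dictated by the energy-increment estimates below. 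Applying the scaling \eqref{scaling}--\eqref{HS5}, I pass to the scaled equation \eqref{SNLSI2} for $u^\ld$; by \eqref{scalENg}, the initial modified energy $E(I_N u_0^\ld) \les N^{2-2s}\ld^{1-2s}(1+\|u_0\|_{H^s})^4$ can be made smaller than the local-existence threshold $\eta_0$ from Remark~\ref{REM:local}.

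Next I would set up the stopping-time argument. The key obstruction, as emphasized in Remark~\ref{REM:martin}, is that a naive iteration of Ito's lemma on each local interval produces a factor of $2$ on $E(I_N u_0)$ via Burkholder-Davis-Gundy, hence exponential growth of the constant. The remedy is to apply Ito's lemma \emph{only once}, on the full scaled interval $[0, \ld^2 T]$, while iterating only the \emph{deterministic} commutator estimate of Proposition~\ref{PROP:CommBd} on local subintervals. Concretely, I would define a stopping time $\tau$ as the first time the modified energy $E(I_N u^\ld(t))$ exceeds $2\eta_0$ (or some fixed multiple of the initial energy), capped at $\ld^2 T$. On $[0,\tau]$, the small-data condition $E(I_N u^\ld(t_0)) \le \eta_0$ needed in Remark~\ref{REM:local} holds on each unit subinterval $J = [t_0, t_0+1]$, provided the companion smallness assumption $\|\nb I_N \Psi^\ld\|_{X^{0,\frac12-\eps}(J)} \le \eta_1$ is also verified. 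This second condition is where the good set $\O_{T,\eps}$ enters: by the scaled stochastic convolution estimate \eqref{scale-sto-co} and Chebyshev's inequality, summing over the $\sim \ld^2 T$ unit subintervals, I can arrange $P(\O_{T,\eps}^c) < \eps$ while keeping each $\|\nb I_N \Psi^\ld\|_{X^{0,\frac12-\eps}(J)} \le \eta_1$, since choosing $\ld \gg 1$ makes the right-hand side of \eqref{scale-sto-co} small.

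I would then feed the local bound $\|\nb I_N u^\ld\|_{X^{0,\frac12-\eps}(J)} \le C_0$ from Remark~\ref{REM:local} into the commutator estimates \eqref{2D1} and \eqref{2D2}, which yields an energy increment of size $O(N^{-1+\be} C_0^6)$ per unit interval. Summing over the $\sim \ld^2 T$ intervals controls the total commutator contribution appearing in \eqref{EngBc} by $\les N^{-1+\be} \ld^2 T$. Combining with the stochastic correction terms in \eqref{EngBc}, which by Lemma~\ref{LEM:scaling} (estimates \eqref{sc-hs} and \eqref{sc-hs2}) contribute terms like $N^{2-2s}\ld^{-1-2s}T$ and $\ld^{-5/2}T^2$, all of which are made negligible by taking $\ld \gg 1$, I obtain
\begin{align*}
\E\bigg[\sup_{0\le t\le \tau} E(I_N u^\ld(t))\bigg] \le 2 E(I_N u_0^\ld) + (\text{small terms}) + C N^{-1+\be}\ld^2 T.
\end{align*}
The hard part is the bookkeeping: I must choose $N$ large so that the per-step energy increment times the number of steps $\sim \ld^2 T$ stays below the doubling threshold, while simultaneously choosing $\ld = \ld(N)$ large enough to kill the scaled noise corrections and make the initial energy subcritical. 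Balancing $N^{-1+\be}\ld^2 \ll 1$ against the constraints forces a compatible choice and is exactly where the regularity restriction $s > \frac56$ surfaces, inherited from Proposition~\ref{PROP:CommBd}. Finally, a continuity/contradiction argument shows that on $\O_{T,\eps}$ the stopping time $\tau$ must in fact equal $\ld^2 T$ (the energy never reaches the threshold), so $E(I_N u^\ld)$ stays bounded on $[0,\ld^2 T]$; undoing the scaling and using \eqref{I2} then gives an almost sure bound on $\|u(t)\|_{\dot H^s}$ over $[0,T]$, which combined with Remark~\ref{REM:L2bound} and the subcritical local theory yields the solution in $C([0,T]; H^s)$ claimed in part~(ii).
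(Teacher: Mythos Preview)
Your overall strategy matches the paper's approach closely: the scaling reduction, the choice of $\ld = \ld(N)$ via \eqref{scalENg}, the good set $\O_{T,\eps}^{(1)}$ controlling the stochastic convolution on unit subintervals via Chebyshev and \eqref{scale-sto-co}, the stopping time $\tau$ defined by the energy threshold, the single application of Ito's lemma on $[0,\tau\wedge\ld^2 T]$, and the summation of the commutator increment $N^{-1+\be}$ over $\sim \ld^2 T$ unit intervals are all exactly what the paper does.

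There is, however, a genuine gap in your final step. You write that ``a continuity/contradiction argument shows that on $\O_{T,\eps}$ the stopping time $\tau$ must in fact equal $\ld^2 T$.'' This does not follow. The bound you derived,
\[
\E\bigg[\sup_{0\le t\le \tau} E(I_N u^\ld(t))\bigg] \le 2 E(I_N u_0^\ld) + \text{(small)} + C N^{-1+\be}\ld^2 T,
\]
is an \emph{expectation} bound, not a pathwise one; the martingale terms in \eqref{Ito1} are controlled only through Burkholder--Davis--Gundy, and there is no pathwise mechanism to run a continuity argument from this. Even on the set where the stochastic convolution is small on every unit interval, the event $\{\tau < \ld^2 T\}$ can have positive probability. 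The paper closes this gap with a Markov-inequality step: it sets $\O^{(2)}_{T,\eps} = \{\tau \ge \ld^2 T\}$, observes that on $\O^{(1)}_{T,\eps}\setminus \O^{(2)}_{T,\eps}$ one has $\sup_{0\le t\le \tau\wedge\ld^2 T} E(I_N u^\ld(t)) = \eta_0$ exactly (by continuity and the definition of $\tau$), and then writes
\[
P\big(\O^{(1)}_{T,\eps}\setminus \O^{(2)}_{T,\eps}\big)
= \eta_0^{-1}\,\E\Big[\ind_{\O^{(1)}_{T,\eps}\setminus \O^{(2)}_{T,\eps}}\cdot \sup_{0\le t\le \tau\wedge\ld^2 T} E(I_N u^\ld(t))\Big]
\le \eta_0^{-1}\,\E\Big[\sup_{0\le t\le \tau\wedge\ld^2 T} E(I_N u^\ld(t))\Big],
\]
which is then bounded by the right-hand side of your expectation estimate. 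Choosing $N = N(T,\eps)$ large so that $\eta_0^{-1}\ld^2 T N^{-1+} < \eps/4$ (this is precisely where $s>\tfrac56$ enters) yields $P(\O^{(1)}_{T,\eps}\setminus \O^{(2)}_{T,\eps}) < \eps/2$, and one takes $\O_{T,\eps} = \O^{(1)}_{T,\eps}\cap \O^{(2)}_{T,\eps}$. Your set $\O_{T,\eps}$, as defined, is only the paper's $\O^{(1)}_{T,\eps}$; you must further excise the set where $\tau < \ld^2 T$, and the Markov step above is the missing ingredient that makes that excision small.
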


Once we prove  Proposition \ref{PROP:aas}, 
Theorem \ref{THM:main} follows
from the Borel-Cantelli lemma.
See, for example,  \cite{CO, BOP2}.
See also Remark \ref{REM:growth}.
Hence, in the remaining part of this paper, 
we focus on proving Proposition~\ref{PROP:aas}.

\begin{proof}[Proof of Proposition \ref{PROP:aas}]

As in the deterministic setting \cite{CKSTT0}, 
we first apply the scaling~\eqref{scaling}, 
where 
 $\ld  = \ld (N) = \ld(T, \eps)\gg1 $ is to be chosen later.
Note that, given $\o \in \O$,  
 $ u = u(\o) $ solves \eqref{SNLS0} on $[0, T]$
 if and only if the scaled function $ u^\ld = u^\ld(\o)$ solves \eqref{SNLS0} 
 on $[0, \ld^2 T]$ with the scaled initial data $ u_0^\ld = u^\ld(0)$.
We then apply the $I$-operator to the scaled function $u^\ld$.
In the following, we focus on studying the scaled $I$-SNLS \eqref{SNLSI2}.
In view of 
 Remark~\ref{REM:L2bound} 
 and~\eqref{I2}
 with Lemma \ref{LEM:bound}, it suffices to show that 
 $ \|I_N \uu\|_{\dot{H}^1} $ remains finite  on $[0, \ld^2 T]$
 with a large probability.

Fix   $\frac 56 < s < 1$ and $u_0 \in H^s(\R^3)$.
Given large $T \gg 1 $ and small $\eps > 0$, fix $N= N(T, \eps ) \gg1 $ (to be chosen later).
We now choose $\ld = \ld(N) \gg 1$ such that 
\begin{align}
 N^{2-2s}\ld^{1-2s} \ll N^{-2\ta}
\label{Y0}
\end{align}

\noi
for some small $\ta >0$.
More precisely, 
we can choose
\begin{align}
\ld \sim  N^\frac{2- 2s+2\ta }{2s-1} \gg 1
\label{Y2}
\end{align}

\noi 
under the condition  that $\frac 12 < s < 1$.
Then, from the scaling property 
\eqref{scalENg} of the modified energy
and~\eqref{Y0}, we have 
\begin{align}
E(I_N u_{0}^{\ld})
\leq C(u_0) N^{2-2s}\ld^{1-2s} 
\ll N^{-2\ta}  \eta_0 
\ll \eps \eta_0
\label{Y1}
\end{align}

\noi
by choosing $N = N(\eps) \gg 1$,  
where $\eta_0$ is as in 
\eqref{ZX4}.

Let $\Psi^\ld$ denote the stochastic convolution corresponding to the
scaled noise $\phi^\ld \xi^\ld$.
By Lemma \ref{LEM:scaling} with $p \gg 1$,  \eqref{Y0}, \eqref{Y2}, 
and choosing $N = N(T, \eps) \gg 1$, we have, for $p \geq 2$, 
\begin{align}
\begin{split}
\Big\| \|\nb I_N \Psi^\ld \|_{X^{0,\frac{1}{2}-}{([j,j+1])}}
\Big\|_{L^p(\O)}
&\les N^{ - \ta} \ld^{-1} \|\phi \|_{\HS(L^2; \dot H^s)}\\
&\ll (\eps \ld^{-2} T^{-1})^\frac{1}{p} \eta_1,
\end{split}
\label{stc}
\end{align}

\noi
uniformly in  $j \in \N \cup \{0\}$, 
where $\eta_1$ is as in \eqref{ZX4}.
Note that by choosing $p \gg1 $, 
\eqref{stc} imposes only a mild condition
$N \geq (\eps^{-1} T)^{0+}$.
For $j \in \N \cup \{0\}$, 
define $A^j_\eps \subset \O$ by 
\begin{align}
A^j_\eps =\bigg\{    \o\in\O:   \|\nb I_N \Psi^\ld   \|_{X^{0,\frac{1}{2}-}_{[j ,j + 1 ]}}\leq \eta_1   \bigg\}.
\label{Y3}
\end{align}

\noi
Now, set $\O_{T, \eps}^{(1)}$ by setting
\begin{align*}
\O^{(1)}_{T, \eps}=\bigcap_{j=0}^{[ \ld^2T ]} A^j_\eps, 
\end{align*}

\noi
where $[ \ld^2T ]$ denotes the integer part of $\ld^2 T$.
Then, it follows
from Chebyshev's inequality
and \eqref{stc} that 
\begin{align}
P\big( \O\setminus \O^{(1)}_{T, \eps} \big) 
<  \frac \eps 2.
\label{Y5}
\end{align}

Lastly, note that from 
Lemma \ref{LEM:scaling} with \eqref{Y0} and \eqref{Y2}, we have
\begin{align}
\begin{split}
\ld T^\frac{1}{2} \|I_N \phi^\ld\|_{\HS(L^2;\dot{H}^1)}
& \les N^{1-s}\ld^{\frac 12 - s} T^\frac{1}{2}\|\phi\|_{\HS(L^2;\dot{H}^s)}
\ll N^{-\ta} T^\frac{1}{2}\|\phi\|_{\HS(L^2;H^s)}\\
& \ll \eps^\frac{1}{2} \|\phi\|_{\HS(L^2;H^s)}
\end{split}
\label{Y6}
\end{align}

\noi
and
\begin{align}
\begin{split}
\ld T^\frac{1}{2} \|I_N \phi^\ld\|_{\HS(L^2;\dot{H}^\frac{3}{4})}
& \les \ld^{- \frac 14} T^\frac{1}{2}\|\phi\|_{\HS(L^2;\dot{H}^\frac{3}{4})}
\ll N^{-\g} T^\frac{1}{2}\|\phi\|_{\HS(L^2;H^s)}\\
& \ll \eps^\frac{1}{4} \|\phi\|_{\HS(L^2;H^s)}
\end{split}
\label{Y6a}
\end{align}

\noi
by choosing $N = N(T, \eps) \gg 1$ sufficiently large, 
where $\g$ is given by 
\begin{align*}
\g = \frac{1-s+\ta}{4s-2}>0.
\end{align*}

Now, we define a stopping time $\tau$ by 
\begin{align}\label{stop-t}
\tau  =\tau _\o (I_N \uu_0, I_N \phi^\ld) 
: =\inf\Big\{   t:   
\sup_{0\leq t' \leq t} E(I_N u^\ld(t'))\geq \eta_0 \Big\},
\end{align}

\noi
where $\eta_0$ is as in \eqref{ZX4}.
Note that in view of the blowup alternative
stated in Proposition~\ref{PROP:LWP},  
the condition \eqref{stop-t}
guarantees that the solution 
$I_N u^\ld$ to the scaled $I$-SNLS \eqref{SNLSI2}
exists on $[0, \tau]$.
Then, set 
\begin{align}
 \O^{(2)}_{T, \eps} 
= \Big\{   \o\in \O : \tau \geq \ld^2 T \Big\}
\label{Y7a}
\end{align}

\noi
and 
\begin{align}
 \O_{T, \eps} = 
\O^{(1)}_{T, \eps} \cap \O^{(2)}_{T, \eps} .
\label{Y8}
\end{align}

We claim that
\begin{align}
P\big( \O_{T, \eps}^{(1)} \setminus \O^{(2)}_{T, \eps} \big) < \frac \eps2.
\label{Y9}
\end{align}

\noi
Then, it follows from \eqref{Y8} with \eqref{Y5} and \eqref{Y9}
that 
\begin{align}
P( \O_{T, \eps}^c) < \eps.
\label{Y10}
\end{align}

In the following, we prove \eqref{Y9}.
Let $ \o\in \O_{T, \eps}^{(1)} \setminus \O^{(2)}_{T, \eps}$.
Then, from Remark  \ref{REM:local} 
with \eqref{Y3}
and \eqref{stop-t}, 
we have 
\[
\|\nb I_N \uu\|_{X^{0,\frac{1}{2}-}([j,j+1])} \leq C_0
\]

\noi
for any $j \in \N \cup \{0\}$ such that $j + 1\leq \tau$.
Hence, it follows from Proposition \ref{PROP:CommBd} 
with \eqref{SNLSI2} that 
\begin{align}\label{commu}
\bigg| \Im \int_{j}^{j+1}\int _{\R^3} 
\cj{
\Dl I_N \uu - I_N \NN( \uu )}
[I_N, \mathcal N](\uu)dxdt \bigg| \les N^{-1+}.
\end{align}

\noi
Then, from Lemma \ref{LEM:Ito} and \eqref{commu}, we have
one can write \eqref{EngBc} as:
\begin{align}
\begin{split}
\E& \bigg[\sup_{0\leq t\leq \tau \wedge \ld^2T}   E(I_N \uu(t))\bigg]  \\
& \leq 2E(I_N \uu_0)
+C \ld^2T\|I_N \phi^\ld\|^2_{\HSii}
+ C \ld^4T^2 \|I_N \phi^\ld\|^4_{\HS(L^2;\dot{H}^\frac{3}{4})}
\\
& \hphantom{X}
+C \ld^2 TN^{-1+}.
\end{split}
\label{exbound}
\end{align}

On the other hand, from \eqref{stop-t} and the continuity
of the modified energy (in time), 
we have 
\begin{align}
\sup_{0\leq t\leq \tau\wedge \ld^2 T}   E(I_N \uu(t;\o))
=\eta_0 
\label{Y11}
\end{align}

\noi
for any 
$ \o\in \O_{T, \eps}^{(1)} \setminus \O^{(2)}_{T, \eps}$.
Hence, 
from \eqref{exbound} and \eqref{Y11}
with \eqref{Y1},  \eqref{Y6}, and  \eqref{Y6a},we have
\begin{align}
\begin{split}
P\big( \O_{T, \eps}^{(1)} \setminus \O^{(2)}_{T, \eps} \big) 
& = \E\Big[ \ind_{\O_{T, \eps}^{(1)} \setminus \O^{(2)}_{T, \eps}}\Big]\\
& =  \eta_0^{-1} 
\E\bigg[
\ind_{\O_{T, \eps}^{(1)} \setminus \O^{(2)}_{T, \eps}}\cdot
\sup_{0\leq t\leq \tau \wedge \ld^2T}   E(I_N \uu(t))\bigg]  \\
& \leq \eta_0^{-1} 
\E\bigg[\sup_{0\leq t\leq \tau \wedge \ld^2T}   E(I_N \uu(t))\bigg]  \\
& \leq 2\eta_0^{-1} E(I_N \uu_0)+C\eta_0^{-1}  \ld^2T\|I_N \phi^\ld\|^2_{\HSii}
\\
& \hphantom{X} 
+ C \eta_0^{-1} \ld^4T^2 \|I_N \phi^\ld\|^4_{\HS(L^2;\dot{H}^\frac{3}{4})}
+C \eta_0^{-1} \ld^2 TN^{-1+}\\
& \leq \frac{\eps}{4}
+C \eta_0^{-1} \ld^2 TN^{-1+}.
\end{split}
\label{Y12}
\end{align}

\noi
As in the deterministic case \cite{CKSTT0}, 
we can make the last term on the right-hand side of \eqref{Y12}
small, provided that $s > \frac 56$.
In fact, with \eqref{Y2}, we can choose $N = N(T, \eps) \gg 1$ such that 
\begin{align}\label{comm-small}
T\les  \eps \ld^{-2}N^{1-}
&\sim \eps N^{\frac{6s-5 -4\ta -}{2s-1}}, 
\end{align}

\noi 
guaranteeing 
\begin{align}
C \eta_0^{-1} \ld^2 TN^{-1+} <  \frac{\eps}{4}.
\label{Y13}
\end{align}

\noi
Note that \eqref{comm-small} is possible
only when $6s > 5 + 4\ta$, 
which can be satisfied when $s > \frac 56$
by choosing $\ta = \ta(s) > 0$ sufficiently small.

Therefore, the desired bound \eqref{Y9} follows
from \eqref{Y12} and \eqref{Y13}, 
and thus \eqref{Y10} holds
by choosing
$N = N(T, \eps) \gg 1$ such that 
\eqref{Y1}, \eqref{stc}, \eqref{Y6},  \eqref{Y6a}, 
and \eqref{comm-small}
are satisfied.

By  the definition \eqref{Y8}, 
for any $\o \in \O_{T, \eps}$, 
the solution $I_Nu^\ld = I_N u^\ld(\o)$ to 
the scaled $I$-SNLS \eqref{SNLSI2}
exists on the time interval $[0, \ld^2T]$.
Together with \eqref{Y10}, this proves Proposition \ref{PROP:aas}.
\end{proof}

\begin{remark}\label{REM:growth}
\rm

As in the usual application of the $I$-method
in the deterministic setting, our proof of Proposition \ref{PROP:aas}
yields a polynomial growth bound
on the $H^s$-norm of a solution.

From the scaling \eqref{scaling}, 
we have
\begin{align}
E(I_N u^\ld (\ld^2t))=\ld^{-1}    E(I_N u(t)).
\label{X1}
\end{align}

\noi
Thus,  
given $T>0$, from \eqref{X1}, 
\eqref{stop-t}, \eqref{Y7a}, \eqref{Y8}, \eqref{Y2}, 
and \eqref{comm-small}, we have
\begin{align}
\begin{split}
E(I_N u(T))& = \ld E(I_N u^\ld (\ld^2T))    
\leq \ld \eta_0 \les \ld\\
&\les N^{\frac{2-2s+2\theta}{2s-1}}\\
&\les T^{\frac{2-2s+2\theta}{6s-5-4\theta-}}=T^{\frac{1-s-\theta}{3(s-\frac 56)-2\theta-}} 
\end{split}
\label{X2}
\end{align}

\noi
for $\o \in \O_{T, \eps}$, 
where the implicit constant depends on $u_0$ and $\O_{T, \eps}$
On the other hand, 
from~\eqref{I2}, we have 
\begin{align}
\|u(t)\|^2_{H^s}&\les E(I_N u(t))+\|u(t)\|^2_{L^2}.
\label{X3}
\end{align}

\noi
Lastly, 
it follows 
Remark \ref{REM:L2bound} (in particular, Footnote \ref{FT1})
that 
\begin{align}
\sup_{0\le t \le T}\|u(t)\|^2_{L^2} \le C(u_0, \phi, \o) T.
\label{X4}
\end{align}

\noi
Therefore, from \eqref{X2}, \eqref{X3}, and \eqref{X4}, 
we conclude that 
\begin{align}
\|u(t)\|^2_{H^s}\le C(u_0, \phi, \o) \max\Big(T^{\frac{1-s-\theta}{3(s-\frac 56)-2\theta-}} , T\Big)
\label{X5}
\end{align}

\noi
for any $0 \le t \le T$ and 
 $\o \in \O_{T, \eps}$.

Let $u_0 \in H^s(\R^3)$ for some $s > \frac 56$.
Given small $\eps > 0$, 
we apply Proposition \ref{PROP:aas}
and construct a set $\O_{2^j, 2^{-j}\eps}$
for each $j \in \N$. 
Now, set $\Si  = \bigcup_{0 < \eps \ll 1} \bigcap_{j \in \N} \O_{2^j, 2^{-j}\eps}$.
Then, for each $\o \in \Si$, 
there exists $\eps > 0$ 
such that $\o \in  \bigcap_{j \in \N} \O_{2^j, 2^{-j}\eps}$.
In particular, the corresponding solution $u$ to \eqref{SNLS}
exists globally in time.
Furthermore, from \eqref{X5}, we have 
\begin{align*}
\|u(t)\|^2_{H^s}\le C(u_0, \phi, \o) \max\Big(t^{\frac{1-s-\theta}{3(s-\frac 56)-2\theta-}} , t\Big)
\end{align*}

\noi
for any $t > 0$.


\end{remark}

\begin{ackno}
 \rm 

 K.C.~and G.L.~were supported by The Maxwell Institute Graduate School in Analysis and its Applications, a Centre for Doctoral Training funded by the UK Engineering and Physical Sciences Research Council (Grant EP/L016508/01), the Scottish Funding Council, Heriot-Watt University and the University of Edinburgh.
 K.C.~and G.L.~also acknowledge support
 from the European Research Council (grant no.~637995 ``ProbDynDispEq'').
T.O.~was supported by the European Research Council (grant no.~637995 ``ProbDynDispEq''
and grant no.~864138 ``SingStochDispDyn").
The authors would like to thank the anonymous referee for helpful comments.

\end{ackno}

\end{document}